\newtheorem{theorem}{Theorem}
\newtheorem{lemma}[theorem]{Lemma}
\newtheorem{corollary}[theorem]{Corollary}
\newtheorem{proposition}[theorem]{Proposition}
\theoremstyle{definition}
\newtheorem{definition}[theorem]{Definition}
\newtheorem*{notation}{Notation}
\newtheorem{observation}[theorem]{Observation}
\theoremstyle{remark}
\newtheorem{remark}[theorem]{Remark}
\title{Finite rigid sets in sphere complexes}
\author{Edgar A. Bering IV}
\address{Department of Mathematics \& Statistics, San José State University, San José, California}
\email{edgar.bering@sjsu.edu}
\author{Christopher J. Leininger}
\address{Department of Mathematics, Rice University, Houston, Texas}
\email{cjl12@rice.edu}
\date{April 4, 2022}
\subjclass[2020]{57M50 05C25 20E36 20F65}
\DeclareMathOperator{\Aut}{\mathrm{Aut}}
\DeclareMathOperator{\Mod}{\mathrm{Mod}}
\DeclareMathOperator{\Out}{\mathrm{Out}}
\DeclareMathOperator{\PGL}{\mathrm{PGL}}
\newcommand{\Sc}{\mathcal{S}} 
\definecolor{red}{RGB}{213,94,0}
\definecolor{yellow}{RGB}{240,228,66}
\definecolor{orange}{RGB}{230,159,0}
\definecolor{blue}{RGB}{0,114,178}
\definecolor{magenta}{RGB}{204,121,167}
\definecolor{cyan}{RGB}{86,180,233}
\definecolor{green}{RGB}{0,158,115}
\begin{document}

\begin{abstract}
A subcomplex $X\leq \mathcal{C}$ of a simplicial complex is strongly rigid if every locally injective, simplicial map $X\to\mathcal{C}$ is the restriction of a unique automorphism of $\mathcal{C}$. Aramayona and the second author proved that the curve complex of an orientable surface can be exhausted by finite strongly rigid sets. The Hatcher sphere complex is an analog of the curve complex for isotopy classes of essential spheres in a connect sum of $n$ copies of $S^1\times S^2$. We show that there is an exhaustion of the sphere complex by finite strongly rigid sets for all $n\ge 3$ and that when $n=2$ the sphere complex does not have finite rigid sets.
\end{abstract}

\maketitle

\section{Introduction}

The curve complex $\mathcal{C}(S)$ of a surface $S$ is the flag simplicial complex whose $k$-simplicies are sets of $k+1$ isotopy classes of pairwise disjoint essential non-peripheral simple closed curves. The curve complex has been found to exhibit remarkable rigidity properties~\cites{ivanov, korkmaz, luo, shackleton, irmak-superinjective}; with  vast generalizations~\cites{brendle-margalit,McLeay,disarlo-koberda-Gonzalez}. Among these rigidity properties, Aramayona and the second author proved that for each orientable surface of finite type $S$ there is a finite subcomplex $X \subseteq \mathcal{C}(S)$ such that every locally injective, simplicial map $X \to \mathcal{C}(S)$ is the restriction of a unique element of $\Aut(\mathcal{C}(S))$. Such a subcomplex is called \emph{strongly rigid}~\cite{aramayona-leininger}. In subsequent work, they also constructed an exhaustion of the curve complex $\mathcal{C}(S) = \cup_n X_n$ by a nested sequence of finite rigid subcomplexes $X_n$~\cite{aramayona-leininger-2}. Subsequently, a plethora of surface complexes have been shown to have (exhaustions by) finite rigid sets~\cites{ik-nonorientable,irmak-nonorientable,irmak-nonorientable-2,Mate,Mate2,hhlm-pants,shinkle-arc,shinkle-flip}.

The curve complex is used to study the mapping class group of a surface. In the long running analogy between mapping class groups of surfaces and outer automorphisms of free groups, several complexes have been introduced to play an analogous role. The most topological of these is the \emph{sphere complex} $\Sc(M_{n,0})$ of $M_{n,0} = \#_n S^1\times S^2$, the connect sum of $n$ copies of $S^1\times S^2$: a simplicial complex whose $k$-simplicies are sets of $k+1$ isotopy classes of pairwise disjoint essential non-peripheral embedded 2-spheres in $M_{n,0}$~\cite{hatcher}. The mapping class group $\Mod(M_{n,0})$ surjects $\Out(F_n)$~\cite{laudenbach}*{Théorème III} with finite kernel. Like the curve complex, the sphere complex is \emph{simplicially rigid}: Aramayona and Souto prove that for $n\ge 3$ the group $\Aut(\Sc(M_{n,0}))$ is isomorphic to $\Out(F_n)$~\cite{aramayona-souto}. Motivated by the analogy between the curve complex and the sphere complex, we establish the existence of finite strongly rigid sets in the sphere complex of a connect sum of copies of $S^1\times S^2$.

\begin{theorem}\label{rigid set theorem}
For $n \geq 3$, there exists a finite strongly rigid simplicial complex $X\subseteq \Sc(M_{n,0})$; that is,
    for any locally injective, simplicial map $f \colon X\to \Sc(M_{n,0})$
    there exists a unique element $\phi \in \Aut(\Sc(M_{n,0}))$ such that
    \[ f = \phi|_X. \]
\end{theorem}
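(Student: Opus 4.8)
The plan is to follow the strategy of Aramayona and the second author for the curve complex, transported across the surface/free-group analogy by means of the identification $\Aut(\Sc(M_{n,0})) \cong \Out(F_n)$ of Aramayona--Souto (valid since $n \geq 3$), under which $\Aut(\Sc(M_{n,0}))$ is exactly the image of $\Mod(M_{n,0})$. First I would fix a convenient model of $M_{n,0}$ — for instance the double of a handlebody equipped with a chosen trivalent spine $G$ of rank $n$ — and let $\Sigma = \{S_1,\dots,S_{3n-3}\}$ be the maximal sphere system dual to the edges of $G$, which is a top-dimensional simplex of $\Sc(M_{n,0})$ (recall that $\Sc(M_{n,0})$ has dimension $3n-4$, with complementary pieces of a maximal system being $2n-2$ three-holed spheres). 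The finite complex $X$ would be the full subcomplex on the vertex set consisting of $\Sigma$ together with: (i) for each $i$, a finite collection of spheres $S_i'$ obtained by an elementary move replacing $S_i$, so that $(\Sigma\setminus\{S_i\})\cup\{S_i'\}$ is again a maximal system disjoint from $\Sigma\setminus\{S_i\}$; and (ii) a bounded number of further spheres chosen so that the whole configuration fills $M_{n,0}$ (its complement is a disjoint union of balls) and so that no nontrivial element of $\Out(F_n)$ fixes every sphere in $X$. Exhibiting such a finite, filling, ``stabilizer-rigid'' choice is where some care is needed, but it is a finite combinatorial matter once the local move structure of $\Sigma$ is understood.

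Second I would establish the existence of $\phi$. Given a locally injective simplicial map $f\colon X\to\Sc(M_{n,0})$: since $\Sigma$ is a simplex of dimension $3n-4$ and $\Sc(M_{n,0})$ has dimension $3n-4$, and $f$ is injective on the simplex $\Sigma$, its image $f(\Sigma)=\Sigma'$ is a maximal sphere system; local injectivity together with the structure of $X$ then forces $f$ to be injective on all of $X$ and to send the elementary-move spheres of $S_i$ that lie in $X$ to elementary-move spheres of the corresponding sphere of $\Sigma'$, preserving the pattern of which move-spheres are disjoint from which $S_j$. This disjointness/move pattern records the marked trivalent graph $G$ up to precisely the ambiguity killed by the extra spheres of type (ii), so — adapting the reconstruction at the heart of the Aramayona--Souto rigidity theorem to this finite configuration — there is an element $\phi\in\Aut(\Sc(M_{n,0}))$ realizing $f$ on $\Sigma$ and on all the move-spheres contained in $X$. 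For the remaining vertices $T$ of $X$ I would bootstrap: each such $T$ lies in a maximal system reached from $\Sigma$ by a sequence of elementary moves staying inside $X$, so $T$ — and likewise $f(T)$ and $\phi(T)$ — is characterized as the unique sphere disjoint from a prescribed subconfiguration of $X$ of a prescribed combinatorial type; hence $\phi(T)=f(T)$ and $f=\phi|_X$.

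Finally, uniqueness falls out of the design of $X$: if $\psi\in\Aut(\Sc(M_{n,0}))$ also satisfies $\psi|_X=f|_X$, then $\phi^{-1}\psi$ fixes every vertex of $X$ pointwise, and since $X$ was chosen so that only the identity of $\Out(F_n)$ does this, $\psi=\phi$. The main obstacle I anticipate is the reconstruction step: unlike pants decompositions of surfaces, a maximal sphere system admits several qualitatively different elementary moves, and separating versus non-separating spheres behave differently, so verifying that the move pattern visible within a finite $X$ pins down the marked graph $G$ — while simultaneously arranging that $X$ is filling and has trivial pointwise stabilizer — requires a delicate choice of $X$ and a case analysis on the local structure of $G$. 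The smallest case $n=3$ is the most constrained and is likely to require separate, hands-on treatment.
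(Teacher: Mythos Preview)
Your proposal is an outline rather than a proof: the step you yourself flag as the ``main obstacle''---showing that the finite disjointness/elementary-move pattern visible inside $X$ actually pins down the marked graph $G$ and hence a homeomorphism---is precisely where all the content lies, and you have not indicated any mechanism for carrying it out. Saying it is ``a finite combinatorial matter'' and that one should ``adapt the reconstruction at the heart of the Aramayona--Souto rigidity theorem'' is not an argument; Aramayona--Souto's proof is global, and there is no off-the-shelf way to localize it to a finite configuration. Likewise, the bootstrapping claim that each remaining $T\in X$ is ``the unique sphere disjoint from a prescribed subconfiguration'' is asserted, not verified---in the sphere complex this uniqueness typically fails (an $M_{0,4}$ piece always contains \emph{three} spheres), and controlling it is exactly what the paper spends effort on (see Lemma~\ref{six-holed-evil-twins-cross}).

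The paper's route is substantially different from yours, and more concrete. Rather than starting from a pants decomposition $\Sigma$, it takes $Y$ to be only $n$ pairwise disjoint nonseparating spheres with connected complement $N\cong M_{0,2n}$, and sets $X_0=Y\cup \Sc(N)$; the key leverage is Lemma~\ref{punctured-homeo} (automorphisms of $\Sc(M_{0,s})$ are geometric, via the Whitney isomorphism theorem for Kneser graphs), which immediately produces a homeomorphism $h\colon N\to N'$ realizing $f$ on all of $Z=\Sc(N)$. The remaining work is to force $h$ to extend across $Y$, for which the paper adds a carefully chosen ``good pair'' of spheres for each $A\in Y$ and uses Lemma~\ref{six-holed-evil-twins-cross} to kill the two-fold ambiguity in each $M_{0,4}$ piece. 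Finally, note that the paper does \emph{not} take Aramayona--Souto's isomorphism $\Aut(\Sc(M_{n,0}))\cong\Out(F_n)$ as input, as you do; instead it first proves geometric rigidity of $X$ and of an exhaustion, and then \emph{derives} that isomorphism (Theorem~\ref{global rigidity}) before upgrading to strong rigidity. Your use of Aramayona--Souto as a black box is legitimate but forfeits this byproduct.
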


We also show that the sphere complex can be exhausted by finite strongly rigid sets.

\begin{theorem}\label{rigid exhaustion theorem}
For $n\geq 3$, there exists a nested family of finite strongly rigid simplicial complexes $X_j\subseteq \Sc(M_{n,0})$ such that
\[ \Sc(M_{n,0}) = \cup_j X_j. \]
\end{theorem}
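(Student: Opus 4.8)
The plan is to follow the template Aramayona and Leininger used to upgrade a single finite rigid set for the curve complex to an exhaustion~\cite{aramayona-leininger-2}, with \cref{rigid set theorem} as the seed. Fix a maximal sphere system $\Sigma\subseteq M_{n,0}$, so $\Sigma$ has $3n-3$ components and every component of $M_{n,0}\setminus\Sigma$ is a copy of $S^3$ with three open balls removed; for an essential, non-peripheral sphere $S$ write $i(S,\Sigma)$ for the minimal number of circles in $S\cap\Sigma$ over the isotopy class. For $j\ge 0$ let $X_j$ be the subcomplex of $\Sc(M_{n,0})$ spanned by the vertex set $X^{(0)}\cup\{\,S : i(S,\Sigma)\le j\,\}$, where $X$ is the finite strongly rigid complex of \cref{rigid set theorem}, enlarged if necessary so that it contains every sphere of complexity at most some fixed $j_0$ with respect to $\Sigma$ --- one re-verifies strong rigidity of this enlargement, which is a mild point given that the underlying configuration fills $M_{n,0}$. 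The decisive finiteness input is that for each $j$ there are only finitely many isotopy classes of spheres $S$ with $i(S,\Sigma)\le j$: the components of $M_{n,0}\setminus\Sigma$ are simply connected, so (unlike a simple closed curve and a pants decomposition) a sphere has no twisting coordinate relative to $\Sigma$ --- equivalently, the twist about any sphere acts trivially on $\Sc(M_{n,0})$, being a generator of the Laudenbach kernel of $\Mod(M_{n,0})\to\Out(F_n)$~\cite{laudenbach} --- so a sphere in normal form is determined by the finite combinatorial type of its normal picture~\cite{hatcher}, of which there are finitely many of bounded complexity. Hence each $X_j$ is a finite subcomplex, the sequence is nested, and $\bigcup_j X_j=\Sc(M_{n,0})$: every sphere meets $\Sigma$ in finitely many circles in a suitable representative, so every simplex, having finitely many vertices, lies in some $X_j$, and $\Sc(M_{n,0})$ is the flag complex on the disjointness relation.

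It remains to prove that each $X_j$ is strongly rigid, which I would do by induction on $j$, starting from $X_{j_0}\supseteq X$. Given a locally injective simplicial map $f\colon X_j\to\Sc(M_{n,0})$, its restriction to $X_{j-1}$ (or to $X$, in the base case) is again locally injective and simplicial, since the star of a vertex only shrinks on passing to a subcomplex; so the inductive hypothesis (resp.\ \cref{rigid set theorem}) yields a unique $\phi\in\Aut(\Sc(M_{n,0}))$ with $f|_{X_{j-1}}=\phi|_{X_{j-1}}$. Uniqueness of the extension of $f$ to an automorphism is then automatic: any automorphism agreeing with $f$ on $X_j$ agrees with $\phi$ on $X$, hence equals $\phi$. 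So, after replacing $f$ by $\phi^{-1}\circ f$, the task reduces to showing that a locally injective simplicial self-map $f$ of $X_j$ that is the identity on $X_{j-1}$ is the identity on all of $X_j$; that is, $f(S)=S$ for every sphere $S$ with $i(S,\Sigma)=j$.

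This last point is the crux, and the step I expect to be the main obstacle: a detection lemma asserting that each sphere $S$ of complexity $j$ is uniquely recognized inside $\Sc(M_{n,0})$ by finitely much data about its position relative to spheres of strictly smaller complexity, all of which already belong to $X_{j-1}$ and are thus fixed by $f$. Disjointness alone cannot achieve this, since every codimension-one sphere system is disjoint from at least three spheres, so the recognition must also record how $S$ meets $\Sigma$: I would surger $S$ along an innermost circle of $S\cap\Sigma$ to obtain spheres of strictly smaller complexity, together with the relevant component of $\Sigma$ and a few auxiliary spheres of complexity $<j$ that pin down the band along which the surgery is reversed, and then conclude $f(S)=S$ from the facts that $f$ is simplicial, fixes each of these pieces, and carries $S$ to a sphere disjoint from precisely the same spheres as $S$. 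Isolating which auxiliary spheres are needed, showing they occur with complexity below $j$ (this is where the filling property of $X$ and the inductive accumulation of fixed spheres around $S$ are used), and checking that the reconstructed sphere is again essential and non-peripheral, is the bulk of the work; it runs closely parallel to the surgery-and-filling arguments for the curve complex~\cite{aramayona-leininger-2}. The hypothesis $n\ge 3$ enters only through \cref{rigid set theorem}, which rests on the Aramayona--Souto identification $\Aut(\Sc(M_{n,0}))\cong\Out(F_n)$~\cite{aramayona-souto}. Granting the detection lemma, the induction closes, every $X_j$ is strongly rigid, and \cref{rigid exhaustion theorem} follows.
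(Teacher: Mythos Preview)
Your outline is a genuinely different strategy from the paper's, but as written it is not a proof: the two places where you wave your hands are precisely the places where all the content lives.

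First, the ``mild point'' is not mild. You want to enlarge the seed set $X$ from \cref{rigid set theorem} to contain every sphere of complexity at most some $j_0$ and then ``re-verify strong rigidity of this enlargement.'' But enlarging a rigid set while preserving rigidity is exactly the problem you are trying to solve; there is no general principle that a superset of a strongly rigid set is strongly rigid (indeed, the paper's $X_0$ is contained in the rigid $X$ but is itself \emph{not} rigid). You would need a concrete mechanism for adding spheres one at a time while retaining control, and you have not supplied one.

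Second, and more seriously, the detection lemma is only a hope. You propose to surger $S$ along an innermost circle of $S\cap\Sigma$ and to find auxiliary spheres of complexity $<j$ that record the band, but you give no argument that such auxiliary spheres exist, that they have the required complexity bound, or that they pin $S$ down uniquely among all spheres (not just among spheres of complexity $\le j$, which is what you need since $f(S)$ is only known to lie in $\Sc(M_{n,0})$). In the curve-complex setting this step already occupies most of \cite{aramayona-leininger-2}; in the sphere complex you have an additional issue: surgery on a sphere can produce inessential or peripheral spheres, so the inductive data may not all be vertices of $\Sc(M_{n,0})$. You acknowledge this is ``the main obstacle'' and then simply grant it; that is the gap.

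For comparison, the paper avoids complexity filtrations entirely. It builds the exhaustion by \emph{rigid expansion}: starting from a geometrically rigid $X$ containing an $X$--split pants decomposition, it shows (\cref{adding a split pair is rigid}, via \cref{six-holed-evil-twins-cross}) that adjoining a suitable \emph{split pair} of spheres preserves geometric rigidity, then iterates this along the pants graph using Hatcher's connectivity argument to reach every pants decomposition (\cref{geometrically rigid exhaustion}). The upgrade from geometric to strong rigidity is done uniformly at the end, by reproving $\Aut(\Sc(M_{n,0}))\cong\Out(F_n)$ from the exhaustion itself. The point is that each enlargement step is by two explicit spheres whose images are pinned down by a concrete $M_{0,6}$ calculation, so no detection-by-surgery lemma is ever needed.
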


Our proofs of \cref{rigid set theorem,rigid exhaustion theorem} start by constructing an exhaustion of $\Sc(M_{n,0})$ by sets that are \emph{geometrically rigid}: every locally injective simplical map is the restriction of the action of some element of $\Mod(M_{n,0})$,
and any two such elements induce the same element of $\Out(F_n)$. In \cref{strong rigidity section} we combine this uniqueness with the exhaustion to produce a new proof of Aramayona and Souto's isomorphism $\Out(F_n) \cong \Aut(\Sc(M_{n,0}))$. We conclude strong rigidity by using this isomorphism.

The restriction $n\geq 3$ is necessary. While $\Sc(M_{2,0})$ is related to the Farey graph, which has finite rigid sets, each separating sphere in $M_{2,0}$ determines a unique triangle of $\Sc(M_{2,0})$ attached to the natural Farey-subgraph (often called a \emph{fin}). We show that these fins are an obstruction to the existence of rigid sets and we use them to prove a negative result.

\begin{theorem}\label{no rank two rigid}
The sphere complex $\Sc(M_{2,0})$ does not contain any finite rigid subcomplex.
\end{theorem}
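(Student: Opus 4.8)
The plan is to prove the stronger statement: for every nonempty finite subcomplex $X\subseteq\Sc(M_{2,0})$ there is a locally injective simplicial map $f\colon X\to\Sc(M_{2,0})$ that is not the restriction of any automorphism; since $X$ is arbitrary, no finite subcomplex is rigid. The mechanism is the fins. Recall that the full subcomplex $\mathcal{F}\leq\Sc(M_{2,0})$ spanned by the non-separating spheres is the flag complex of the Farey graph --- the Farey tessellation of $\mathbb{H}^2$ --- and that along each Farey edge $e=\{c_1,c_2\}$ one glues a fin $\langle s_e,c_1,c_2\rangle$, where $s_e$ is the (unique) separating sphere disjoint from $c_1$ and $c_2$; the apex $s_e$ is adjacent to $c_1$ and $c_2$ and to no other vertex. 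Consequently each Farey edge $e$ lies in exactly three $2$-simplices of $\Sc(M_{2,0})$, the fin $\langle s_e,c_1,c_2\rangle$ and two Farey triangles $\langle a_e,c_1,c_2\rangle$ and $\langle b_e,c_1,c_2\rangle$; the link of $s_e$ is a single edge, the link of a non-separating vertex is a bi-infinite line with a pendant apex at each of its vertices; and in particular a vertex is separating if and only if it has degree $2$. As this last characterization is purely combinatorial, every $\phi\in\Aut(\Sc(M_{2,0}))$ preserves the separating/non-separating partition of the vertex set. Hence it suffices to produce an $f$ as above that is \emph{not type-preserving}, i.e.\ for which ``$v$ separating $\iff f(v)$ separating'' fails for some $v\in X$: such an $f$ is not $\phi|_X$ for any $\phi$.

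To build $f$, I would exhibit an ``extremal feature'' of $X$ and perform a type-changing surgery there, exploiting that a Farey edge and its fin are interchangeable triangle-completions. If $X$ has an isolated vertex, send it to a vertex of the opposite type. If the $1$-skeleton of $X$ has a leaf $v$ with neighbour $u$: when $v$ is non-separating and $u$ non-separating, move $v$ onto a fin apex $s_{\{u,d\}}$ for a Farey neighbour $d$ of $u$ chosen with $s_{\{u,d\}}\notin V(X)$; when $v$ or $u$ is separating, instead slide that separating vertex onto a non-separating sphere adjacent to $u$ (or to the relevant base vertex) and lying outside $V(X)$ --- in each sub-case a vertex changes type, and local injectivity is immediate because one only perturbs a single vertex of small star. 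If $X$ has no leaf then it has minimum degree $\geq2$; a minimal-counterexample argument (using that the $2$-simplices of $\mathcal{F}$ have tree-like incidences, that fins are ``leaves'' of the enlarged dual graph, and the leaf step to discard dangling edges) then reduces to one of two situations. Either $X$ has a vertex $v$ whose entire star is a single $2$-simplex $\langle v,c_1,c_2\rangle$ --- in which case one folds $v$ onto $s_{\{c_1,c_2\}}$ if $s_{\{c_1,c_2\}}\notin V(X)$, and otherwise \emph{swaps} $v$ with $s_{\{c_1,c_2\}}$ (legitimate since $\langle v,c_1,c_2\rangle$ is a Farey triangle and $s_{\{c_1,c_2\}}$ has no neighbours outside $\{c_1,c_2\}$, so local injectivity holds automatically) --- or $X$ contains a cycle that one re-routes through a fin, using that $\Sc(M_{2,0})$ has non-backtracking closed walks of every length $\geq3$ through an apex, gotten by concatenating the walks $\langle c_1,s_{\{c_1,c_2\}},c_2\rangle$, $\langle c_1,s_{\{c_1,c_2\}},c_2,a_e\rangle$, $\langle c_1,s_{\{c_1,c_2\}},c_2,a_e,b'\rangle$ of lengths $3,4,5$.

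The hard part is making this case analysis genuinely exhaustive and checking local injectivity in the ``saturated'' configurations --- where a fin and both of its Farey triangles already lie in $X$ --- since there a blind translate of an apex collides with a vertex of $X$, so only a swap or a re-routing is available, and one must use the finiteness of $X$ (induction on $|V(X)|$, peeling off extremal features) to guarantee that somewhere a free move, a swap, or a re-routing can be performed. Granting this, the resulting map $f$ changes the type of some vertex, hence is not the restriction of any automorphism of $\Sc(M_{2,0})$; so $X$ is not rigid, and $\Sc(M_{2,0})$ therefore contains no finite rigid subcomplex.
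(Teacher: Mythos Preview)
Your strategy is the paper's strategy: both recognise $\Sc(M_{2,0})$ as the Farey graph with a fin on each edge, note that automorphisms preserve the valence-$2$/infinite-valence (separating/non-separating) partition of vertices, and then construct a locally injective simplicial self-map of $X$ that violates this partition. Your treatment of isolated vertices and leaves is in the same spirit as the paper's.

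The gap is exactly where you say it is. In the minimum-degree-$\geq 2$ case you assert a reduction ``to one of two situations'' via a minimal-counterexample argument, but you do not carry it out: you neither prove that a leafless finite $X$ must contain a vertex whose entire $X$-star is a single $2$-simplex, nor do you make precise what ``re-route a cycle through a fin'' means or why it can be done while preserving local injectivity. Your own sentence ``Granting this'' flags that the proof is not finished. The paper closes this case cleanly and you should adopt its device: when $X$ has no valence-$1$ vertex, take the convex hull $X_{\mathcal F}$ of $X\cap\mathcal F$ inside the Farey graph. Since the Farey triangles are dual to a tree, $X_{\mathcal F}$ is (the $1$-skeleton of) a triangulated polygon, which always has an \emph{ear}: a vertex $v$ of valence $2$ in $X_{\mathcal F}$, with incident Farey edges $e_1,e_2$ forming a length-$2$ path whose endpoints $u,w$ are also joined by a Farey edge $e_3$ and by a length-$2$ fin path. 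One then swaps the length-$2$ Farey path $e_1\cup e_2$ with the fin path over $e_3$ (or, if $e_1$ or $e_2$ is missing from $X$, swaps a fin path at $v$ with a length-$2$ Farey path outside $X_{\mathcal F}$), in each case producing an embedding that sends Farey edges to fin edges or vice versa. This is the concrete ``extremal feature'' you were looking for; it replaces your unproven reduction and completes the argument.
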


\begin{remark}
To clarify, a {\em rigid} subcomplex of a simplicial complex is one for which any locally injective simplicial map back into the complex extends to an automorphism. If that automorphism is unique, then the subcomplex is {\em strongly rigid}. If the complex in question is a sphere complex of a manifold and the automorphism is realized by a homeomorphism, then the subcomplex is {\em geometrically rigid}.
\end{remark}

\section*{Acknowledgements}

The authors thank Valentina Disarlo for asking about the finite rigidity of $\Out(F_n)$ complexes and suggesting we determine strong rigidity.  EB also thanks Shaked Bader for a helpful conversation, Rice University for hospitality when this work was begun, and was supported by the Azreili foundation. CL was partially supported from NSF grant DMS-2106419.

\section{Building blocks}

The sphere complex is a flag complex, so we will focus on the 1-skeleton. In keeping with tradition in the area, unless specified when we say \emph{sphere $S$ in $M$} we mean isotopy class of \emph{essential} ($S$ does not bound a 3-ball), \emph{non-peripheral} ($S$ is not isotopic into $\partial M$) smoothly embedded sphere in $M$, or a representative of the isotopy class, with the context indicating which is intended.

\begin{notation}
    Let $M_{n,s}$ denote the connect sum of $S^3$ with $n$ copies of $S^1\times S^2$ and with $s$ disjoint open balls removed.
\end{notation}

While this article is focused on $\Sc(M_{n,0})$, along the way we will also need to understand certain other $\Sc(M_{n,s})$. In both cases, the elementary building blocks are parallels of those in the surface setting.

\begin{definition}
    Any manifold homeomorphic to $M_{0,3}$ is called a \emph{pair of pants}.
\end{definition}

\begin{definition}
    A maximal collection of disjoint spheres $P \subseteq \Sc(M_{n,s})^{(0)}$ is called a \emph{pants decomposition}.
    Each connected component of $M_{n,s}\setminus P$ is the interior of a pair of pants.
\end{definition}

The following definition and lemma parallel the surface case.

\begin{definition}[\cite{aramayona-leininger}*{Definition 2.2}]
    Let $X \subset \Sc(M_{n,s})$ be a subcomplex. Two spheres $\alpha, \beta \in X^{(0)}$ that intersect essentially have \emph{$X$-detectable intersection} if there are two pants decompositions $P_\alpha, P_\beta \subset X^{(0)}$ such that
    \[ \alpha \in P_\alpha, \beta \in P_\beta, \text{ and } P_\alpha - \alpha = P_\beta - \beta.\]
    In this case $N(\alpha \cup \beta) = M_{0,4}$.
\end{definition}

\begin{lemma}[\cite{aramayona-leininger}*{Lemma 2.3}]\label{detect-to-detect}
    Let $X \subset \Sc(M_{n,s})$ be a subcomplex and suppose that $f \colon X\to \Sc(M_{n,s})$ is a locally injective simplicial map. If $a, b \in X^{(0)}$ have $X$-detectable intersection
    then $f(a), f(b)$ have $f(X)$-detectable intersection, and hence 
    a closed regular neighborhood $\mathcal{N}(f(a) \cup f(b)) \cong M_{0,4}$.
\end{lemma}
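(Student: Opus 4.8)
\noindent\emph{Proof proposal.}
My plan is to push the combinatorial witness of $X$-detectable intersection forward through $f$, using local injectivity in two ways: to guarantee that images of pants decompositions are again pants decompositions, and to keep $f(a)$ and $f(b)$ distinct. Two standard facts about sphere systems in $M_{n,s}$ underlie everything: every pants decomposition consists of exactly $3n+s-3$ spheres, and a collection of disjoint essential non-peripheral spheres is a pants decomposition if and only if it is a maximal simplex of $\Sc(M_{n,s})$ — the point being that a complementary piece which is not a pair of pants always contains a further essential non-peripheral sphere, so every sphere system extends to a pants decomposition.

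Concretely, I would let $P_a, P_b \subset X^{(0)}$ be pants decompositions witnessing the $X$-detectable intersection of $a$ and $b$, and set $Q := P_a - a = P_b - b$, so $a \in P_a$ and $b \in P_b$. Since $f$ is simplicial, $f(P_a)$ and $f(P_b)$ span simplices of $\Sc(M_{n,s})$; since $f$ is locally injective it is injective on each of these simplices, so they are maximal, hence pants decompositions, and moreover $f(P_a) - f(a) = f(Q) = f(P_b) - f(b)$ with $f(a), f(b) \notin f(Q)$. It then remains only to verify that $f(a)$ and $f(b)$ intersect essentially, after which the definition of $f(X)$-detectable intersection applies.

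For that verification, I would first observe $f(a) \ne f(b)$: picking any sphere $v \in Q$, both $\{v,a\}$ and $\{v,b\}$ are edges of $X$ (faces of $P_a$ and of $P_b$, and $v \ne a,b$), so $a$ and $b$ lie in the star of $v$, and injectivity of $f$ on that star forces $f(a) \ne f(b)$. Next, $f(a)$ and $f(b)$ cannot be disjoint: each is disjoint from $f(Q)$ since $f(P_a)$ and $f(P_b)$ are pants decompositions, so disjointness of $f(a)$ and $f(b)$ would make $f(Q) \cup \{f(a), f(b)\}$ a simplex on $3n+s-2$ vertices, one more than the maximum $3n+s-3$ — impossible. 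Hence $f(a)$ and $f(b)$ intersect essentially; together with $f(P_a), f(P_b) \subseteq f(X)^{(0)}$, $f(a)\in f(P_a)$, $f(b)\in f(P_b)$, and $f(P_a) - f(a) = f(Q) = f(P_b) - f(b)$, this says $f(a)$ and $f(b)$ have $f(X)$-detectable intersection. Finally $\mathcal{N}(f(a)\cup f(b)) \cong M_{0,4}$: indeed $f(a)$ and $f(b)$ are two distinct essential spheres contained in the unique non-pants component of $M_{n,s} \setminus f(Q)$, which therefore must be a copy of $M_{0,4}$ (not $M_{1,1}$, which carries a single essential sphere), and they fill it.

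I expect this to be largely bookkeeping, parallel to the surface case \cite{aramayona-leininger}*{Lemma 2.3}; the one step that genuinely deserves care is $f(a)\ne f(b)$, because $a$ and $b$ do not span a simplex of $X$, so injectivity of $f$ on a single simplex does not apply and one must instead pass to the star of a common neighbour $v \in Q$. This presumes $Q \ne \emptyset$, i.e.\ $3n+s-3 \ge 2$; the only excluded case, $M_{n,s} \cong M_{0,4}$, does not arise in the applications (and for $M_{n,s}\cong M_{1,1}$ the hypothesis is vacuous).
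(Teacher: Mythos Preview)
Your argument is correct and follows the same approach as the paper: push the witnessing pants decompositions $P_a,P_b$ forward through $f$ and observe that the resulting pair detects the intersection of $f(a)$ and $f(b)$. The paper's proof is much terser (it simply asserts that ``$f$ preserves membership and set equality''), whereas you explicitly justify that $f(a)\neq f(b)$ via a common neighbour in $Q$, that $f(a)$ and $f(b)$ must intersect by a count of simplex dimensions, and that the ambient complementary piece is $M_{0,4}$ rather than $M_{1,1}$; these are exactly the points the paper leaves implicit.
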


\begin{proof}
    Let $P_a,P_b \subset X^{(0)}$ be the pants decompositions detecting the intersection of $a$ and $b$. Since $f$ is locally injective and simplicial, $f(P_a)$ and $f(P_b)$ are again pants decompositions. Moreover, $f$ preserves membership and set equality, so $f(P_a)$ and $f(P_b)$ detect the intersection of $f(a)$ and $f(b)$.
\end{proof}

In order to build strongly rigid sets we capitalize on subcomplexes of $\Sc(M_{n,0})$ coming from {\em punctured 3-spheres}, $M_{0,s}$. The sphere complex of a punctured 3-sphere $\Sc(M_{0,s})$ is finite and admits a purely combinatorial description, due to the following.

\begin{lemma}\label{partition-determines}
A sphere $S$ in $M_{0,s}$ is determined by the partition of $\partial M_{0,s}$ induced by the connected
components of $M_{0,s}\setminus S$.
\end{lemma}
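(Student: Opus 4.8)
The plan is to show the map from isotopy classes of spheres in $M_{0,s}$ to partitions of $\partial M_{0,s}$ into two nonempty parts is injective, and the key tool is the classical fact that in a simply connected $3$-manifold, embedded spheres are determined up to isotopy by the partition of the other boundary spheres they separate — i.e.\ ``irreducibility plus simple connectivity forces uniqueness.'' First I would fix a sphere $S \subseteq M_{0,s}$. Since $M_{0,s}$ is a punctured $3$-sphere, it is irreducible away from the boundary spheres: any embedded $2$-sphere either bounds a ball or is isotopic to a boundary component, and more generally any embedded $2$-sphere separates $M_{0,s}$ (because $H_2(M_{0,s})$ is generated by the boundary spheres and $H_1(M_{0,s}) = 0$, so the Mayer--Vietoris argument shows cutting along $S$ disconnects the manifold). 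Thus $M_{0,s} \setminus S$ has exactly two components, each of which is a compact $3$-manifold whose boundary consists of a copy of $S$ together with a sub-collection of the $\partial M_{0,s}$; this is the induced partition of $\partial M_{0,s}$.

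Next I would argue that each of the two pieces, after capping off the copy of $S$ with a ball, is again a punctured $3$-sphere. This follows because each piece embeds in $S^3$ (fill in all the missing balls of $M_{0,s}$ to recover $S^3$, inside which $S$ bounds a ball on one side by the Schoenflies theorem), and a compact submanifold of $S^3$ with $2$-sphere boundary components that is obtained by removing open balls is itself a punctured $3$-sphere. Hence the homeomorphism type of each complementary piece is determined by the cardinality of its boundary, and in particular by which boundary components of $M_{0,s}$ it contains.

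Finally, the uniqueness-up-to-isotopy step: given two spheres $S, S'$ inducing the same partition $\{A, B\}$ of $\partial M_{0,s}$, I would isotope $S'$ to intersect $S$ minimally and transversally, then run the standard innermost-disk/irreducibility argument. Any circle of $S \cap S'$ is innermost on $S'$, bounding a disk $D$ in $S'$ disjoint from $S$; since $M_{0,s}$ is irreducible (in the relevant sense — no essential spheres other than those parallel to boundary, and the two pieces are punctured spheres), $\partial D$ also bounds a disk $D'$ in $S$, and $D \cup D'$ bounds a ball, permitting an isotopy of $S'$ reducing $|S \cap S'|$. After finitely many steps $S$ and $S'$ are disjoint, cutting $M_{0,s}$ into three pieces; because they induce the same partition, $S$ and $S'$ cobound a product region $S^2 \times [0,1]$ (the middle piece is a punctured sphere with exactly two boundary components, hence $S^2 \times [0,1]$), so $S$ is isotopic to $S'$.

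The main obstacle I expect is making the irreducibility/innermost-disk argument clean in the punctured setting: one must be careful that ``essential'' spheres are excluded so that every innermost circle genuinely bounds a disk on the $S$ side, and that the product-region conclusion is correctly deduced from the classification of punctured $3$-spheres with two boundary components. Once those classical $3$-manifold facts are in hand the argument is routine; alternatively, one could cite Laudenbach or Hatcher for the statement that spheres in $M_{0,s}$ (equivalently, in $S^3$ with punctures) are classified by the partitions they induce, but I would prefer to give the short self-contained Schoenflies/irreducibility argument above.
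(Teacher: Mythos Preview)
Your approach diverges from the paper's and has a genuine gap in the innermost-disk step. The manifold $M_{0,s}$ is \emph{not} irreducible for $s \geq 2$, and your parenthetical that there are ``no essential spheres other than those parallel to boundary'' is false once $s \geq 4$: the vertices of $\Sc(M_{0,s})$ are precisely the essential non-peripheral spheres, and there are many. Consequently the sphere $D \cup D'$ produced from an innermost circle need not bound a ball, and the isotopy of $S'$ reducing $|S \cap S'|$ need not exist. Filling in the boundary to work inside $S^3$ does not rescue this, since the isotopy produced there may sweep through the removed balls and hence not restrict to $M_{0,s}$. Your final step---that disjoint spheres inducing the same partition cobound an $S^2 \times [0,1]$---is correct, but reaching disjointness is exactly the hard part, and the ``short self-contained Schoenflies/irreducibility argument'' does not get you there.

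The paper sidesteps this entirely. It builds a homeomorphism $h \colon M_{0,s} \to M_{0,s}$ with $h(a) = b$ and $h|_{\partial M_{0,s}} = \mathrm{id}$ by gluing homeomorphisms of the complementary punctured-sphere pieces (whose homeomorphism types you correctly identify). Since $H_2(M_{0,s})$ is generated by the boundary classes, $h_*$ is the identity on $H_2$; simple connectivity and the Hurewicz isomorphism then force $h_\sharp = \mathrm{id}$ on $\pi_2$, so $a$ is homotopic to $b$, and Laudenbach's theorem upgrades homotopy to isotopy. This is exactly the citation you mention at the end as an ``alternative''---but in fact it is the substance of the argument, not a fallback, and the innermost-disk route is not a viable self-contained replacement without substantially more machinery (for instance Hatcher's normal form for sphere systems).
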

\begin{proof}
    Suppose $a,b$ are two spheres in $M$ that induce the same partition of $\partial M_{0,s}$ into two sets.  Both of the sets are non-empty and contain at least two components, since by our definition spheres are essential.
The closures of each of the two connected components of $M_{0,s}\setminus a$ are homeomorphic to a corresponding connected component of $M_{0,s}\setminus b$ via a homeomorphism that is identity on $\partial M_{0,s} $ and takes the $a$ boundary components to the $b$ boundary components. After possibly composing with a Dehn twist in $b$ these homeomorphisms can be extended to a homeomorphism $h: M_{0,s} \to M_{0,s}$ such that $h(a) = b$ and $h$ is the identity on $\partial M_{0,s}$. Since $H_2(M_{0,s})$ is generated by the classes of the boundary spheres, $h_\ast \colon H_2(M_{0,s}) \to H_2(M_{0,s})$ is the identity. Moreover, $M_{0,s}$ is simply connected, so the Hurewicz homomorphism is an isomorphism; by the naturality of the Hurewicz isomorphism we conclude $h_\sharp \colon \pi_2(M_{0,s}) \to \pi_2(M_{0,s})$ is the identity. Thus $a$ is homotopic to $b$, and by Laudenbach's theorem~\cite{laudenbach}*{Théorème I} we conclude that $a$ is in fact isotopic to $b$.
\end{proof}

\begin{lemma}\label{punctured-homeo}
    The natural map $\Mod(M_{0,s}) \to \Aut(\Sc(M_{0,s}))$ is
    a surjection.
\end{lemma}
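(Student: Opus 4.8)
The plan is to use \cref{partition-determines} to turn the statement into a combinatorial problem about partitions, and then to check that every combinatorial symmetry is realized by a homeomorphism permuting boundary spheres. By \cref{partition-determines}, the vertices of $\Sc(M_{0,s})$ biject with the partitions $\{A,B\}$ of the set of boundary spheres $\{1,\dots,s\}$ into two parts of size at least $2$, and two vertices $\{A,B\}$ and $\{C,D\}$ span an edge exactly when the partitions are nested (one of $A,B$ lies inside one of $C,D$); since $\Sc$ is a flag complex this determines the whole complex. For $s\le 3$ the complex is empty, and for $s=4$ it is three isolated vertices --- the three ways of splitting a $4$-element set into two pairs --- on which $\Mod(M_{0,4})$ realizes the full automorphism group $S_3$; so from now on I assume $s\ge 5$.

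Given $\phi\in\Aut(\Sc(M_{0,s}))$, the first step is to locate an automorphism-invariant subcomplex whose symmetries are understood. I would single out the \emph{minimal spheres}, those $\{A,B\}$ with $\min(|A|,|B|)=2$, equivalently those cutting off a pair of pants containing exactly two boundary spheres. Cutting along a sphere and counting shows that the valence of $\{A,B\}$ in the $1$-skeleton is $2^{|A|}+2^{|B|}-s-4$, and for $s\ge 5$ this is largest exactly when $\min(|A|,|B|)=2$; since $\phi$ preserves valence it permutes the minimal spheres. For $s\ge 5$ a minimal sphere has a unique $2$-element part, so the minimal spheres correspond bijectively to the $2$-element subsets of $\{1,\dots,s\}$, two of them being adjacent exactly when the subsets are disjoint; that is, the subcomplex they span is the Kneser graph $K(s,2)$. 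As $s>2\cdot 2$ one has $\Aut(K(s,2))\cong S_s$ (equivalently, for $s\ge 5$ the maximum-size intersecting families of $2$-subsets of an $s$-set are exactly the $s$ stars, which $\phi$ must permute), so there is $\sigma\in S_s$ with $\phi(\{i,j\})=\{\sigma i,\sigma j\}$ for every $2$-subset.

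The second step is to propagate $\sigma$ to all of $\Sc(M_{0,s})$. For $s\ge 5$ and any vertex $\{A,B\}$, one checks that the minimal spheres adjacent to it are precisely those indexed by the $2$-subsets in $\binom{A}{2}\cup\binom{B}{2}$ (every other possible nesting relation fails for cardinality reasons). Regarded as a graph on $\{1,\dots,s\}$, this index set is a disjoint union of two cliques with vertex sets $A$ and $B$, so $\{A,B\}$ is recovered from its set of minimal neighbors as the pair of connected components. Applying $\phi$ and using the first step, the minimal neighbors of $\phi(\{A,B\})$ are those indexed by $\binom{\sigma A}{2}\cup\binom{\sigma B}{2}$, whence $\phi(\{A,B\})=\{\sigma A,\sigma B\}$. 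Finally, $\sigma$ is realized by a homeomorphism of $M_{0,s}$ permuting its boundary spheres by $\sigma$ (the map $\Mod(M_{0,s})\to S_s$ is onto), and such a homeomorphism carries the sphere $\{A,B\}$ to $\{\sigma A,\sigma B\}$, so its action on $\Sc(M_{0,s})$ is exactly $\phi$; this proves surjectivity.

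The hard part is the first step: identifying the right intrinsic invariant --- here, maximal valence --- that isolates a subcomplex, a Kneser graph, whose automorphism group is classical. Once that is in place, the rest is bookkeeping with the nesting relation, the valence formula, and the identification $\Aut(K(s,2))\cong S_s$.
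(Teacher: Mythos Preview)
Your proof is correct and follows essentially the same route as the paper's: reduce to the induced subcomplex on size-$2$ spheres, identify it with the Kneser graph $K(s,2)$ whose automorphism group is $S_s$, and realize every permutation by a homeomorphism of $M_{0,s}$. The only differences are that you justify explicitly, via the valence formula, that automorphisms preserve the size-$2$ spheres (a step the paper leaves implicit), and you cite $\Aut(K(s,2))\cong S_s$ directly rather than passing through the Whitney isomorphism theorem for line graphs.
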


\begin{proof}
    If $s \le 3$, the complex $\Sc(M_{0,s})$ is empty or a singleton and the result is trivial. The complex $\Sc(M_{0,4})$ is a disconnected 3-vertex graph and it is easy to construct homeomorphisms of $M_{0,4}$ that generate  $\Aut(\Sc(M_{0,4}))$. So we suppose $s\geq 5$.
    
    Let $[s]$ denote the set of connected components of $\partial M_{0,s}$,
    thought of as in bijection with the set $\{1, \ldots, s\}$. By \cref{partition-determines}, each vertex of $\Sc(M_{0,s})$ corresponds to a two-piece partition of $[s]$ where each piece has size at least 2.
    For brevity we refer to the cardinality of the smaller partition piece determined by a sphere $S$ as the \emph{size} of $S$. The vertices of $\Sc(M_{0,s})$ are partitioned by size. Each sphere of size greater than 2 is determined uniquely by the set of size 2 spheres disjoint from it, hence $\Aut(\Sc(M_{0,s})) \simeq \Aut(\Sc_2(M_{0,s}))$, where $\Sc_2(M_{0,s})$ is the induced subcomplex spanned by size 2 spheres.
    
    The 1-skeleton $\Sc_2(M_{0,s})^{(1)}$ is the graph whose vertices are $2$-element subsets of $[s]$ and edges are between disjoint sets---this is known in the literature as the Kneser graph $K(s, 2)$. In turn, $K(s,2)$ is the complement of the line graph $L(K_s)$, where $K_s$ is the complete graph on vertex set $[s]$. Since $s\ge 5$, the Whitney isomorphism theorem implies that $\Aut(\Sc_2(M_{0,s})) \simeq \Aut(K_s)$~\cite{whitney-theorem-ref}. The group $\Aut(K_s) \simeq \Aut([s])$, the symmetric group on $[s]$, thus $\Aut(\Sc_2(M_{0,s}))\simeq \Aut([s])$. Moreover this isomorphism is given by the action of $\Aut([s])$ on 2-element subsets of $[s]$, and so is compatible with the natural actions of $\Mod(M_{0,s})$ on $\Sc_2(M_{0,s})$ and $[s]$. The natural action of $\Mod(M_{0,s})$ on $\partial M_{0,s}$ clearly surjects $\Aut([s])$, this completes the proof.
\end{proof}

An easy consequence is the following.

\begin{corollary} \label{C:induced homeo}
If $N \cong M_{0,n} \cong N'$ and $\phi \colon \Sc(N) \to \Sc(N')$ is any simplicial isomorphism, then there exists a homeomorphism $h \colon N \to N'$ so that $h(z) = f(z)$ for every sphere $z \in \Sc(N)$. \qed
\end{corollary}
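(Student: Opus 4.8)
The plan is to reduce the statement immediately to the case already handled by \cref{punctured-homeo}. First I would fix, once and for all, homeomorphisms $\iota \colon N \to M_{0,n}$ and $\iota' \colon N' \to M_{0,n}$; these exist by hypothesis. Each such homeomorphism carries essential non-peripheral spheres to essential non-peripheral spheres and preserves disjointness, so it induces a simplicial isomorphism on sphere complexes, say $\iota_\ast \colon \Sc(N) \to \Sc(M_{0,n})$ and $\iota'_\ast \colon \Sc(N') \to \Sc(M_{0,n})$. The composite $\Phi := \iota'_\ast \circ \phi \circ (\iota_\ast)^{-1}$ is then a simplicial automorphism of $\Sc(M_{0,n})$.

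Next I would invoke \cref{punctured-homeo}: since the natural map $\Mod(M_{0,n}) \to \Aut(\Sc(M_{0,n}))$ is surjective, there is a homeomorphism $g \colon M_{0,n} \to M_{0,n}$ whose induced action on $\Sc(M_{0,n})$ equals $\Phi$, i.e.\ $g_\ast(w) = \Phi(w)$ for every sphere $w$ in $M_{0,n}$. Setting $h := (\iota')^{-1} \circ g \circ \iota \colon N \to N'$, a short diagram chase gives, for any sphere $z$ in $N$,
\[ h_\ast(z) = (\iota'_\ast)^{-1} g_\ast \iota_\ast(z) = (\iota'_\ast)^{-1}\Phi(\iota_\ast(z)) = (\iota'_\ast)^{-1}\bigl(\iota'_\ast \circ \phi \circ (\iota_\ast)^{-1}\bigr)(\iota_\ast(z)) = \phi(z), \]
which is exactly what is claimed.

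I do not expect any genuine obstacle here; the corollary is a formal repackaging of \cref{punctured-homeo} together with the observation that a homeomorphism induces a simplicial isomorphism of sphere complexes. The only point worth a sentence is that \cref{punctured-homeo} produces a homeomorphism realizing $\Phi$ on the \emph{vertex set} of $\Sc(M_{0,n})$, so I would note that, since $\Sc$ is a flag complex, agreement on vertices forces agreement on every simplex; hence $h$ realizes $\phi$ as a map of complexes, not merely on vertices, and the proof is complete.
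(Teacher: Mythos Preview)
Your proposal is correct and is exactly the intended argument: the paper gives no proof at all, marking the corollary with \qed\ and calling it ``an easy consequence'' of \cref{punctured-homeo}, which is precisely the reduction-by-conjugation you carry out. The only thing to note is the typo in the statement itself ($f$ should be $\phi$), which you silently and correctly fixed.
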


The following observation relies on \cref{partition-determines} and is a useful combinatorial criterion for distinguishing among spheres in a pair of nicely arranged $M_{0,4}$-submanifolds of some $M_{n,s}$.

\begin{lemma}\label{six-holed-evil-twins-cross}
Suppose $a, b, c \subset M_{n,s}$ are spheres such that $b\cap c =\emptyset$ and $b$ and $c$ both essentially intersect $a$ exactly once, and every component of $\partial \mathcal{N}(a\cup b\cup c)$ is either essential or peripheral. Let $b'\subseteq \mathcal{N}(a\cup b)$ be the sphere not isotopic to $a$ or $b$ and similarly define $c' \subseteq \mathcal{N}(a\cup c)$. Then $b'$ and $c'$ intersect,
and both intersect $b$ and $c$. (Note that $b'$ and $c'$ are unique by \cref{partition-determines}.)
\end{lemma}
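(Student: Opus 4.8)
My plan is to transplant the entire configuration into $N:=\mathcal{N}(a\cup b\cup c)$ and reduce the statement to bookkeeping with \cref{partition-determines}. First I would identify $N$. Since $b\cap c=\emptyset$ and each of $b,c$ meets $a$ in a single circle, $a\cup b\cup c$ is connected and $N$ is obtained by gluing $\mathcal{N}(a\cup b)\cong M_{0,4}$ to $\mathcal{N}(c)\cong M_{0,2}$ along a neighborhood of the circle $a\cap c$, which deformation retracts to $S^1$. Van Kampen then gives $\pi_1(N)=1$ and Mayer--Vietoris gives $H_2(N)\cong\mathbb{Z}^5$; as $N$ is simply connected, a half-lives-half-dies argument forces every component of $\partial N$ to be a $2$-sphere, so capping off produces a closed simply connected $3$-manifold, hence $S^3$, and therefore $N\cong M_{0,6}$. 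The hypothesis that every component of $\partial N$ is essential or peripheral in $M_{n,s}$ says precisely that no component of $\partial N$ bounds a ball in $M_{n,s}$; by the standard normal-form and innermost-disk techniques for sphere systems (\cite{hatcher}), the inclusion $N\hookrightarrow M_{n,s}$ then neither creates nor destroys isotopies, essentiality, non-peripherality, or disjointness of spheres carried by $N$. In particular $a,b,c$, $b'\subset\mathcal{N}(a\cup b)\subset N$, and $c'\subset\mathcal{N}(a\cup c)\subset N$ are essential, non-peripheral, pairwise distinct spheres of $N$ (so in particular $b',c'$ exist and are unique by \cref{partition-determines} applied in the two copies of $M_{0,4}$), and it suffices to prove all the intersection assertions inside $N\cong M_{0,6}$.

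Next I would read off the partition of the $6$ boundary spheres of $N$ determined by each of our spheres; this is the computational heart. The disjoint circles $a\cap b$ and $a\cap c$ cut $a$ into two disks and an annulus, and each of $b,c$ is cut by its circle into two disks, one on each side of $a$. Writing $N=N^+\cup_a N^-$ for the two sides of $a$, one checks $N^{\pm}\cong M_{0,4}$ and that the six boundary spheres of $N$ are naturally labelled $\beta^{\pm},\gamma^{\pm},\delta^{\pm}$ (the ``$b$-side'', ``$c$-side'', and ``annular'' pieces on the two sides of $a$), with
\[
\begin{aligned}
a &=\{\beta^+,\gamma^+,\delta^+\mid\beta^-,\gamma^-,\delta^-\},\qquad
b =\{\beta^+,\beta^-\mid\gamma^+,\gamma^-,\delta^+,\delta^-\},\\
c &=\{\gamma^+,\gamma^-\mid\beta^+,\beta^-,\delta^+,\delta^-\},\qquad
b'=\{\beta^+,\gamma^-,\delta^-\mid\beta^-,\gamma^+,\delta^+\},\\
c'&=\{\gamma^+,\beta^-,\delta^-\mid\gamma^-,\beta^+,\delta^+\}.
\end{aligned}
\]
The formulas for $a,b,c$ are immediate from the picture; those for $b',c'$ follow because $\mathcal{N}(a\cup b)$ is an $M_{0,4}$ sitting inside $N$ whose four boundary spheres are $\beta^{\pm}$ together with two spheres whose partitions restrict on the $N$-boundary to $\{\gamma^+,\delta^+\}$ and $\{\gamma^-,\delta^-\}$, and $b'$ is the unique essential sphere of that $M_{0,4}$ distinct from $a$ and $b$ (symmetrically for $c'$).

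Finally I would carry out the combinatorial check. As a consequence of \cref{partition-determines}, two spheres of $M_{0,6}$ have disjoint representatives exactly when some part of one partition is contained in a part of the other, and otherwise they intersect essentially; equivalently, they intersect iff all four sets $S_i\cap T_j$ are nonempty. Running this test on $\{b',c'\}$, $\{b',b\}$, $\{b',c\}$, $\{c',b\}$, $\{c',c\}$ using the table, in each case all four intersections are nonempty — for instance for $b'$ and $c'$ the four sets are $\{\delta^-\}$, $\{\beta^+,\gamma^-\}$, $\{\beta^-,\gamma^+\}$, $\{\delta^+\}$ — so each pair intersects in $N$, and hence in $M_{n,s}$ by the first paragraph; for $\{b',b\}$ and $\{c',c\}$ this is in any case immediate, since they are distinct essential spheres of a common $M_{0,4}$. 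I expect the real work to be concentrated in the second paragraph: correctly analyzing how $a\cup b\cup c$ and the five spheres sit inside $N\cong M_{0,6}$, and in particular pinning down the partitions of $b'$ and $c'$; once the table is in hand, the rest is routine.
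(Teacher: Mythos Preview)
Your proof is correct and follows essentially the same route as the paper: reduce to $N=\mathcal{N}(a\cup b\cup c)\cong M_{0,6}$, note that the hypothesis on $\partial N$ makes the inclusion intersection-preserving, compute the partitions of $\partial N$ determined by $a,b,c,b',c'$, and check non-nestedness. The only difference is cosmetic---your labels $\beta^\pm,\gamma^\pm,\delta^\pm$ versus the paper's $\{1,\dots,6\}$, and your more elaborate (and somewhat overkill, via the Poincar\'e conjecture) verification that $N\cong M_{0,6}$, which the paper simply asserts.
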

\begin{proof}
Observe that $\mathcal{N}(a\cup b\cup c)\cong M_{0,6}$, and since each boundary sphere is essential or peripheral in $M_{n,s}$, the inclusion $\mathcal{N}(a\cup b\cup c) \to M_{n,s}$ preserves intersection. Fix an identification of $\partial M_{0,6}$ with $[6] = \{1,\ldots 6\}$. By \cref{partition-determines} the spheres are determined by the partition of $[6]$, and we identify each sphere with one of its partition pieces. Up to a choice of boundary identification we have $a = \{1, 2, 3\}$, $b = \{1, 6\}$ and $c = \{3, 4\}$. The boundary $\partial N(a\cup b)$ consists of the spheres $\{1\}$, $\{6\}$, $\{2,3\}$, and $\{4, 5\}$. By again appealing to \cref{partition-determines} applied to $N(a\cup b)$ we deduce $b' = \{1,4,5\}$ and similarly $c' = \{3,5,6\}$, see \cref{evil-twin-figure}. The lemma follows since spheres in $M_{0,s}$ intersect if and only if the partitions they define are not nested.
\end{proof}

\begin{figure}[ht]
\begin{center}
\begin{tikzpicture}[every node/.style={font=\scriptsize},thick,scale=2]
\node[circle,draw=black,fill=black!10,thick] (v1) at (0:1) {$1$};
\node[circle,draw=black,fill=black!10,thick] (v2) at (60:1) {$2$};
\node[circle,draw=black,fill=black!10,thick] (v3) at (120:1) {$3$};
\node[circle,draw=black,fill=black!10,thick] (v4) at (180:1) {$4$};
\node[circle,draw=black,fill=black!10,thick] (v5) at (240:1) {$5$};
\node[circle,draw=black,fill=black!10,thick] (v6) at (300:1) {$6$};

\draw[red] ($ (v1)+(240:0.4) $) arc (240:390:0.4) to[out=120,in=300] 
                    ($ (v2)+(30:0.4) $) arc  (30:90:0.4) node[midway,above right] {$a$} to[out=180,in=0]
                    ($ (v3)+(90:0.4) $) arc (90:240:0.4) to[out=330,in=150] cycle;
\draw[yellow] ($(v6) + (150:0.2)$) arc (150:330:0.2) to[out=60,in=240] node[midway,below right] {$b$}
            ($(v1) + (330:0.2)$) arc (-30:150:0.2) to[out=240,in=60] cycle;
\draw[blue] ($(v3) + (330:0.2)$) arc (-30:150:0.2) to[out=240,in=60] node[midway,above left] {$c$}
            ($(v4) + (150:0.2)$) arc (150:330:0.2) to[out=60,in=240] cycle;
            
\draw[orange] ($(v4) + (90:0.25)$) arc (90:210:0.25) to[out=300,in=120]
              node[midway,below left] {$b'$}
              ($(v5) + (210:0.25)$) arc (210:300:0.25) to[out=30,in=210]
              ($(v1) + (300:0.25)$) arc (300:450:0.25) to[out=180,in=0] cycle;
              
\draw[magenta] ($(v5) + (180:0.3)$) arc (180:270:0.3) to[out=0,in=180]
              node[midway,below] {$c'$}
              ($(v6) + (270:0.3)$) arc (270:390:0.3) to[out=120,in=300]
              ($(v3) + (30:0.3)$) arc (30:180:0.3) to[out=270,in=90] cycle;
\end{tikzpicture} 
\caption{The configuration of intersecting spheres in $M_{0,6}$ considered in the proof of \cref{six-holed-evil-twins-cross}.\label{evil-twin-figure}}
\end{center}
\end{figure}
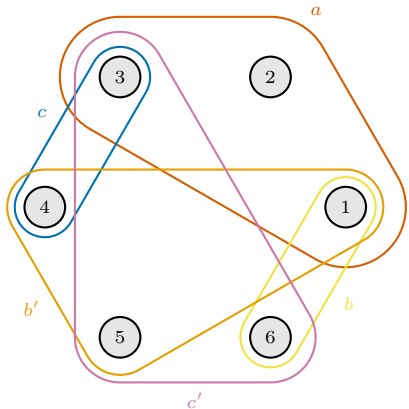

\section{The finite strongly rigid set}\label{rigid set section}

In this section we construct a particular set of spheres that we will later prove is strongly rigid for $\Sc(M_{n,0})$ when $n\geq 3$ (which we continue to assume to be the case until \Cref{S:no rigid rank 2}). In this section we will show that every locally injective simplicial map from our set comes from the action of some $h\in \Mod(M_{n,0})$.

Let $Y$ be a maximal collection of disjoint spheres whose union is non-separating, and $\mathcal{N}^\circ(Y)$ an open regular neighborhood so that
\[ M_{n,0} \setminus \mathcal{N}^\circ (Y) \cong M_{0,2n} \]
and let $Z \subset \Sc(M_{n,0})$ be the set of all spheres in $N = M_{n,0} \setminus \mathcal{N}^\circ(Y)$, so that $Z = \Sc(N)$.  There is a labeling of the components of $\partial N$ by elements of $Y$ so that $S \subset \partial N$ is labeled by $A \in Y$ if $S$ is isotopic to $Y$ via the inclusion $N \subset M_{n,0}$ (thus, each element of $Y$ appears as exactly two labels).  Given a component $S \subset \partial N$, let $\delta(S) \in Y$ denote its label.

Let $X_0$ be the subcomplex induced by $Y\cup Z$. Note that $X_0$ decomposes as the join of subcomplexes induced by $Y$ and $Z$.  In particular, for any permutation of $Y$, there exists an automorphism of $X_0$ which is the identity on $Z$ and effects the given permutation on $Y$.  Consequently, $X_0$ is not rigid.

Suppose $f \colon X_0 \to \Sc(M_{n,0})$ is a locally injective, simplicial map. Each intersecting pair of spheres in $Z$ is $X_0$-detectable, so as a consequence of \cref{detect-to-detect} the submanifold of $M_{n,0}$ filled by $f(Z)$ is connected. Since $f$ is locally injective and simplicial, $f(X_0)$ decomposes as a join of $f(Y)$ and $f(X_0)$. Thus, $f(Y)$ is a set of $n$ non-separating spheres,
$N' = M_{n,0} \setminus \mathcal{N}^\circ(f(Y)) \cong M_{0,2n}$, and $\Sc(N') = f(Z)$.  As with $N$, there is a labeling of the components of $\partial N'$ by elements of $f(Y)$, and we similarly write $\delta(S) \in f(Y)$ for the label on the component $S \subset \partial N'$.

By \cref{C:induced homeo} there is a homeomorphism $h \colon N \to N'$ such that for each $z \in Z$, $h(z) = f(z)$.  Now observe that if 
\begin{equation} \label{E:labels preserved}
\delta(h(S)) = f(\delta(S)),
\end{equation}
for every component $S \subset \partial N$, then $h \colon N \to N'$ can be extended to a homeomorphism $\hat h \colon M_{n,0} \to M_{n,0}$ which induces $f$, and we would be done.  There is no reason that \cref{E:labels preserved} should hold, however, but we will shortly show that by adding some additional spheres, it does.

Given an essential disk $D$ in $N$ with $\partial D \subset S$, where $S \subset \partial N$ is a component, we note that the regular neighborhood of $D \cup S$ is a pair of pants we denote $P(S,D)$.  The boundary of the pants, $\partial P(S,D)$, consists of $S$ together with two other spheres.

For each sphere $A \in Y$, let $A_+,A_- \subset \partial N$ be the boundary components labeled by $A$.  Consider an essential sphere $a$ in $M_{n,0}$ that intersects $A$ essentially in a single simple closed curve and is disjoint from every other sphere in $Y$.  This intersection is $X_0$--detectable and $a \cap N$ is a union of two disjoint, essential disks, $D_+,D_-$, with $\partial D_+ \subset A_+$ and $\partial D_- \subset A_-$.  Say that $a$ is {\em good for $A$} if there is a decomposition
\[ \partial P(A_+, D_+) \cup \partial P(A_-, D_-) = A_+ \cup A_- \cup S_1 \cup S_2 \cup S_3 \cup S_4, \]
into distinct spheres where $S_1,S_2$ are peripheral. After an isotopy, we may assume that $S_1,S_2 \subset \partial N$, and we do so.  We write
\[ \partial_0 (A,a) = A_+ \cup A_- \cup S_1 \cup S_2 \subset \partial N,\]
and
\[ \partial (A,a) = \partial_0(A,a) \cup S_3 \cup S_4.\]

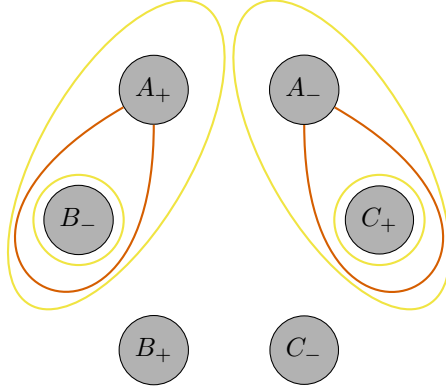
\begin{figure}[ht]
\begin{center}
\begin{tikzpicture}
\node[circle,fill=black!30,draw=black] (Am) at (60:2) {$A_-$};
\node[circle,fill=black!30,draw=black] (Ap) at (120:2) {$A_+$};
\node[circle,fill=black!30,draw=black] (Bm) at (180:2) {$B_-$};
\node[circle,fill=black!30,draw=black] (Bp) at (240:2) {$B_+$};
\node[circle,fill=black!30,draw=black] (Cm) at (300:2) {$C_-$};
\node[circle,fill=black!30,draw=black] (Cp) at (360:2) {$C_+$};

\draw[red, thick] (Ap) to[loop,out=210,in=270,min distance=4cm] (Ap);
\draw[red, thick] (Am) to[loop,out=330,in=270,min distance=4cm] (Am);

\draw[yellow, thick] (180:2) circle (0.6cm);
\draw[yellow, thick] (360:2) circle (0.6cm);

\begin{scope}[shift={(150:1.73)},rotate=60]
\draw[yellow, thick] (0,0) ellipse (2.3cm and 1cm);
\end{scope}

\begin{scope}[shift={(30:1.73)},rotate=-60]
\draw[yellow, thick] (0,0) ellipse (2.3cm and 1cm);
\end{scope}
\end{tikzpicture} \caption{A good sphere (red) for the boundary spheres labeled $A$, with the spheres $S_i$ in yellow, inside $M_{0,6}$.}\label{fig:goodsphere}
\end{center}
\end{figure}

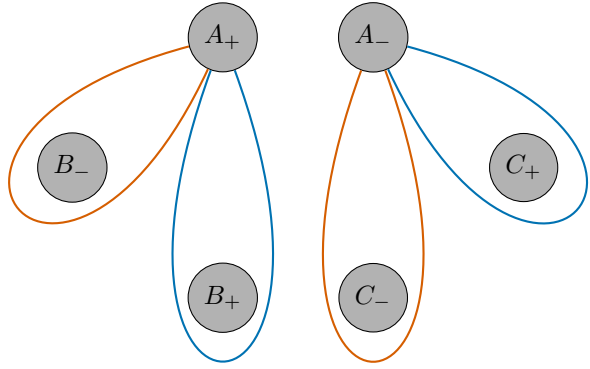
\begin{figure}[ht]
\begin{center}
\begin{tikzpicture}
\node[circle,fill=black!30,draw=black] (Am) at (60:2) {$A_-$};
\node[circle,fill=black!30,draw=black] (Ap) at (120:2) {$A_+$};
\node[circle,fill=black!30,draw=black] (Bm) at (180:3) {$B_-$};
\node[circle,fill=black!30,draw=black] (Bp) at (240:2) {$B_+$};
\node[circle,fill=black!30,draw=black] (Cm) at (300:2) {$C_-$};
\node[circle,fill=black!30,draw=black] (Cp) at (360:3) {$C_+$};

\draw[red, thick] (Ap) to[loop,out=195,in=245,min distance=4.6cm] (Ap);
\draw[blue, thick] (Ap) to[loop,out=250,in=290,min distance=5.5cm] (Ap);

\draw[blue, thick] (Am) to[loop,out=295,in=345,min distance=4.6cm] (Am);
\draw[red, thick] (Am) to[loop,out=250,in=290,min distance=5.5cm] (Am);
\end{tikzpicture} \caption{A pair of good spheres (red and blue) for the boundary sphere labeled $A$ inside $M_{0,6}$.}\label{fig:goodpair}
\end{center}
\end{figure}

For each $A \in Y$, let $a',a''$ be two disjoint spheres that are good for $A$ so that
\[ \partial (A,a') \cap \partial (A,a'') = A_+ \cup A_-,\]
and let $X$ be the union of $X_0$, together with such a good pair $a',a''$ for every $A \in Y$.  This is possible as long as $n \geq 3$, see \cref{fig:goodsphere,fig:goodpair}.

\begin{lemma} \label{L:labels preserved}
If $f \colon X \to \Sc(M_{n,0})$ is any locally injective, simplicial map and $h \colon N \to N'$ is the homeomorphism defined by the restriction $f|_{X_0}$ as above, then for every component $S \subset \partial N$, we have $\delta(h(S)) = f(\delta(S))$.
\end{lemma}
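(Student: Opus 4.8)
The plan is to read off the label of each boundary component of $N$ from the good pairs, using the partition criterion of \cref{partition-determines}. Since $h$ carries $Z=\Sc(N)$ isomorphically onto $f(Z)=\Sc(N')$, it suffices to show, for each $A\in Y$, that $\{h(A_+),h(A_-)\}$ is exactly the pair of components of $\partial N'$ carrying the label $f(A)$; this is the desired equality $\delta(h(S))=f(\delta(S))$ for $S\in\{A_+,A_-\}$. So fix $A$ and a good pair $a',a''$ for $A$. For $a'$, with disks $D_\pm=a'\cap N$, neither pants $P(A_+,D_+)$ nor $P(A_-,D_-)$ can have all three cuffs peripheral, since such a pants would be a connected component of $N\cong M_{0,2n}$; hence each of them has, besides $A_\pm$, one peripheral cuff $\pi_\pm=\pi_\pm(a')\subset\partial N$ and one cuff $\sigma_\pm=\sigma_\pm(a')\in Z$, with $\{\pi_+,\pi_-\}=\{S_1,S_2\}$ and $\{\sigma_+,\sigma_-\}=\{S_3,S_4\}$ indexed by which pants each bounds. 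By \cref{partition-determines}, $\sigma_+$ is the sphere of $Z$ whose partition of $\partial N$ has small side $\{A_+,\pi_+\}$ (of size two, as $2n\ge 6$), and $\sigma_-$ has small side $\{A_-,\pi_-\}$. Because $h$ is a homeomorphism $N\to N'$ with $h|_Z=f|_Z$, the sphere $f(\sigma_+)=h(\sigma_+)$ then has small side $\{h(A_+),h(\pi_+)\}$ in $\partial N'$, and similarly for $\sigma_-$ and for the analogous data $\sigma_\pm(a''),\pi_\pm(a'')$ attached to $a''$.

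The key step will be the claim that each of $f(\sigma_+(a'))$, $f(\sigma_-(a'))$, $f(\sigma_+(a''))$, $f(\sigma_-(a''))$ has a cuff labelled $f(A)$---equivalently, its small side in $\partial N'$ contains one of the two components labelled $f(A)$. Granting the claim, the lemma follows by a short count. It locates in each of the sets $\{h(A_+),h(\pi_+(a'))\}$, $\{h(A_-),h(\pi_-(a'))\}$, $\{h(A_+),h(\pi_+(a''))\}$, $\{h(A_-),h(\pi_-(a''))\}$ a component labelled $f(A)$, and there are only two such. The hypothesis $\partial(A,a')\cap\partial(A,a'')=A_+\cup A_-$ makes $\pi_+(a'),\pi_-(a'),\pi_+(a''),\pi_-(a'')$ four distinct components, none of them $A_+$ or $A_-$, so their $h$-images are four distinct components, none of them $h(A_\pm)$. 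If $h(A_+)$ were not labelled $f(A)$, the first and third sets would force $h(\pi_+(a'))$ and $h(\pi_+(a''))$ to be the two components labelled $f(A)$, and then the second set would force $h(\pi_-(a'))$ to be one of those two---impossible. Hence $h(A_+)$ is labelled $f(A)$, and the same argument on the minus-data (now using that $h(A_+)$ is already labelled $f(A)$) gives that $h(A_-)$ is too; as these are distinct, the lemma holds for $A$, hence for all of $Y$.

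The hard part will be the claim itself. In $M_{n,0}$ the pants $P(A_+,D_+)$ realizes $A$ (isotopic to $A_+$), the $Y$-sphere $\pi_+(a')$, and $\sigma_+(a')$ as the three cuffs of a common pants---indeed as the cuffs of a pants component of $M_{n,0}\setminus\mathcal{N}^\circ(P_A)$, where $P_A\subseteq X^{(0)}$ is a pants decomposition witnessing the detectable intersection of $A$ and $a'$ (as in \cref{detect-to-detect}): with $W=P_A-A=P_{a'}-a'$, the complement of $W$ consists of pants together with a single $M_{0,4}$-piece isotopic to $\mathcal{N}(A\cup a')$, which $A$ splits into $P(A_+,D_+)$ and $P(A_-,D_-)$. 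I would reduce the claim to the combinatorial statement that a locally injective simplicial map sends cuffs of a common pants of a pants decomposition to cuffs of a common pants: granting that, $f(A)$, $f(\pi_+(a'))\in f(Y)$, and $f(\sigma_+(a'))\in\Sc(N')$ cobound a pants $P^*$ in $M_{n,0}$; two of its cuffs lie in $f(Y)$, so $P^*$ isotopes into $N'$ with those two cuffs being the corresponding components of $\partial N'$, one of them labelled $f(A)$; then $\partial P^*\subseteq\partial N'\cup f(\sigma_+(a'))$ forces $P^*$ to be an entire side of $f(\sigma_+(a'))$ in $N'$, necessarily the small side since it contains only two boundary components---so that small side contains a component labelled $f(A)$, as claimed. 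What remains is exactly the combinatorial statement: that $f$ carries the four boundary spheres of $\mathcal{N}(A\cup a')$, together with the way $A$ partitions them, to the analogous data for $\mathcal{N}(f(A)\cup f(a'))$. I expect this to be the main obstacle; it should follow by analyzing the pants-decomposition witnesses of detectability and ruling out the alternatives with configuration arguments in the spirit of \cref{six-holed-evil-twins-cross}, but it requires real care.
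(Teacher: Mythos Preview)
Your setup and the counting argument at the end are fine, but the reduction leaves a real gap. The ``combinatorial statement'' you isolate---that $f$ takes the three cuffs $A,\pi_+(a'),\sigma_+(a')$ of a common pants of $M_{n,0}\setminus P_A$ to three cuffs of a common pants of $M_{n,0}\setminus f(P_A)$---is precisely an adjacency-preservation statement for the dual graphs of the two pants decompositions, and locally injective simplicial maps need not do this. Detectability (\cref{detect-to-detect}) only tells you that the unique $M_{0,4}$ piece of $M_{n,0}\setminus f(W)$ is $\mathcal N(f(A)\cup f(a'))$; it does not tell you which four spheres of $f(W)$ bound that piece, so you cannot conclude that $f(\sigma_+(a'))$ is among them. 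Establishing that would require using many further spheres of $X_0$ to pin down a dual-graph isomorphism, and you have not indicated how; the appeal to ``configuration arguments in the spirit of \cref{six-holed-evil-twins-cross}'' is not a proof.

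The paper avoids this difficulty with a more global argument. Rather than tracking the two spheres $\sigma_\pm(a')=S_3,S_4$ individually, it uses the entire collection $Z_0(A,a')\subset Z$ of spheres disjoint from $\partial(A,a')$. Every $z\in Z_0(A,a')$ is disjoint from $a'$, so simpliciality gives $f(a')$ disjoint from $h(Z_0(A,a'))$; and since the intersection of $a'$ with $A$ is $X$--detectable, $f(a')$ meets $f(A)$ essentially and hence cannot be isotoped into $N'$. These two constraints alone force every boundary component of $N'$ that $f(a')$ meets to lie in the four-element set $h(\partial_0(A,a'))=\{h(A_+),h(A_-),h(S_1),h(S_2)\}$. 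In particular the two components of $\partial N'$ labelled $f(A)$ lie there. Repeating with $a''$ confines them to $h(\partial_0(A,a''))$ as well, and the good-pair condition $\partial_0(A,a')\cap\partial_0(A,a'')=A_+\cup A_-$ then pins them down as $h(A_+)$ and $h(A_-)$. No pants-adjacency statement is needed.
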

\begin{proof}
Fix $A \in Y$ and let $A_+,A_-$ be the components of $\partial N$ with $\delta(A_+) = \delta(A_-)=A$.  Let $a',a'' \in X$ be the two spheres that are good for $A$ and write
\[ \partial_0 (A,a') = A_+ \cup A_- \cup S_1' \cup S_2' \quad \mbox{ and } \quad \partial_0 (A,a'') = A_+ \cup A_- \cup S_1'' \cup S_2''.\]

We let $Z_0(A,a') \subset Z$ be the set of all spheres in $Z$ that are disjoint from $\partial (A,a')$.  Observe that $f(a')$ is a sphere that essentially intersects $f(A)$ (since the essential intersection of $A$ and $a'$ is $X$--detectable).  Therefore, we note
\begin{itemize}
    \item $f(a')$ cannot be isotoped into $N'$, and
    \item $f(a')$ is also disjoint from $f(Z_0(A,a')) = h(Z_0(A,a'))$.
\end{itemize}
From these two conditions, the only boundary components of $N'$ that $f(a')$ can essentially intersect are $h(\partial_0 (A,a'))$.  Therefore, the two components labeled $f(A)$ are necessarily contained in $h(\partial_0 (A,a'))$.  By similar reasoning, we deduce that the two components labeled $f(A)$ are also contained in $h(\partial_0(A,a''))$.  Since
\begin{eqnarray*}
h(\partial_0(A,a')) \cap h(\partial_0(A,a'')) & =& h(\partial_0(A \cap a') \cap \partial_0(A,a''))\\
&=& h(A_+ \cup A_-)\\
&= &h(A_+) \cup h(A_-),
\end{eqnarray*}
it follows that $\delta(h(A_+)) = f(A) = f(\delta(A_+))$ and $\delta(h(A_-)) = f(A) = f(\delta(A_-))$.  Since $A \in Y$ was arbitrary, this completes the proof.
\end{proof}

\begin{proposition}\label{geometrically rigid set}
The set $X$ constructed in this section is geometrically rigid.
\end{proposition}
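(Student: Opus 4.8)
The plan is to promote the homeomorphism $h\colon N\to N'$ supplied by \cref{L:labels preserved} to a homeomorphism of $M_{n,0}$ that realizes $f$ on all of $X$, and then to check that the element of $\Out(F_n)$ it induces is forced by $f$ alone.

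First I would fix a locally injective, simplicial $f\colon X\to\Sc(M_{n,0})$ and pass to the homeomorphism $h\colon N\to N'$ with $h|_Z=f|_Z$ together with the label identity $\delta(h(S))=f(\delta(S))$ of \cref{L:labels preserved}. For each $A\in Y$ this identity says $h$ carries the two components of $\partial N$ labeled $A$ onto the two components of $\partial N'$ labeled $f(A)$; since every homeomorphism between the boundary spheres of $\mathcal{N}(A)\cong S^2\times[-1,1]$ extends over the product, I can glue in a homeomorphism $\mathcal{N}(A)\to\mathcal{N}(f(A))$ carrying $A$ to $f(A)$ and agreeing with $h$ on the boundary, producing $\hat h\colon M_{n,0}\to M_{n,0}$ with $\hat h|_{X_0}=f|_{X_0}$. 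To conclude $\hat h|_X=f$ it then suffices, after replacing $f$ by the locally injective simplicial map $\hat h^{-1}\circ f$, to show: if $f'\colon X\to\Sc(M_{n,0})$ is locally injective, simplicial, and restricts to the identity on $X_0$, then $f'$ fixes each good sphere $a'$. Since $a'$ meets $A$ in an $X_0$-detectable curve, \cref{detect-to-detect} gives $\mathcal{N}(f'(a')\cup A)\cong M_{0,4}$, so $f'(a')$ meets $A$ in a single essential circle and, being disjoint from $f'(Y\setminus\{A\})=Y\setminus\{A\}$, meets $N$ in two essential disks, one with boundary on $A_+$ and one on $A_-$. Now $a'$ may be isotoped off the spheres $S_3',S_4'$ (the non-peripheral components of $\partial P(A_+,D_+)$ and $\partial P(A_-,D_-)$, which lie in $Z$), so $f'(a')$ is disjoint from $f'(S_3')=S_3'$ and $f'(S_4')=S_4'$; each of $S_3',S_4'$ cuts a pair of pants out of $N$ containing $A_+$, resp.\ $A_-$, and in a pair of pants there is a unique isotopy class of essential disk with boundary on a prescribed boundary sphere (the disk and the surgered sphere determine one another via \cref{partition-determines}). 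Hence the trace disks of $f'(a')$ are isotopic to $D_+,D_-$, and a sphere meeting $A$ once and disjoint from $Y\setminus\{A\}$ is determined up to isotopy by its trace disks (the reassembling annulus in $\mathcal{N}(A)$ being unique up to isotopy and Dehn twists about $A$, which act trivially on $\Sc(M_{n,0})$); so $f'(a')=a'$, and likewise $f'(a'')=a''$.

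For the uniqueness clause, suppose $\phi_1,\phi_2\in\Mod(M_{n,0})$ both restrict to $f$ on $X$, and put $g=\phi_2^{-1}\phi_1$, which fixes every sphere of $X\supseteq Y\cup Z$. Realize $g$ by a homeomorphism $G$; after an isotopy $G$ preserves each $A\in Y$, hence $\mathcal{N}(Y)$ and $N$, and $G|_N$ fixes every sphere of $\Sc(N)=Z$ (using that these spheres remain pairwise non-isotopic in $M_{n,0}$). Since $2n\ge 6$, $G|_N$ must also fix each component of $\partial N$ — a transposition of two boundary spheres could not fix every sphere of $\Sc(N)$ — so after a further isotopy $G|_N$ fixes every piece of a pants decomposition of $N$ chosen inside $Z$. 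On each complementary $M_{0,3}$, $G|_N$ restricts to a homeomorphism fixing all three boundary spheres, hence is isotopic to a product of twists about those spheres by the known structure of the mapping class group of a pair of pants; reassembling these, together with the observation that $G|_{\mathcal{N}(A)}$ is a power of the twist $T_A$, exhibits $G$ as isotopic in $M_{n,0}$ to a product of twists about spheres. A twist about a sphere is supported in a simply connected $S^2\times[-1,1]$, so it acts trivially on $\pi_1(M_{n,0})=F_n$; thus $g$ lies in the kernel of $\Mod(M_{n,0})\to\Out(F_n)$, i.e.\ $\phi_1$ and $\phi_2$ induce the same outer automorphism.

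The step I expect to be the main obstacle is the last one: extracting from ``$G|_N$ fixes every sphere of $\Sc(N)$ and every boundary component'' the conclusion that $G|_N$ is isotopic to a product of sphere twists requires genuine control of diffeomorphism groups of reducible $3$--manifolds (the kernel of $\Mod(M_{0,2n})\to\Aut(\Sc(M_{0,2n}))$, analyzed through $\mathrm{Diff}$ of $S^2\times I$ and of a pair of pants, in the spirit of \cite{laudenbach}), rather than the detection combinatorics used up to this point; the disk-rigidity bookkeeping in the second paragraph is routine by comparison.
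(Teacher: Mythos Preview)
Your existence argument contains a genuine gap. The assertion that a sphere meeting $A$ once with prescribed trace disks $D_+,D_-$ is determined up to isotopy is false: the third sphere $e'$ in $\mathcal{N}(A\cup a')\cong M_{0,4}$ also meets $A$ once, is disjoint from $Y\setminus\{A\}$ and from $S_3',S_4'$, and hence has trace disks lying in the same pairs of pants $P(A_\pm,D_\pm)$ and therefore isotopic to $D_+,D_-$ --- yet $e'\neq a'$ in $\Sc(M_{n,0})$. The error is in your parenthetical. Each circle $\partial D_\pm$ separates its copy of $S^2$ into two disks, and an embedded annulus in $S^2\times[-1,1]$ joining $\partial D_+$ to $\partial D_-$ determines a pairing between the two disks on the $A_+$ side and the two on the $A_-$ side; there are two such pairings, they yield non-isotopic spheres (namely $a'$ and $e'$, distinguished by which of $S_1',S_3'$ ends up on the same side as which of $S_2',S_4'$), and a Dehn twist about $A$ preserves that pairing. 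So your argument only reaches $f'(a')\in\{a',e'\}$, and you never invoke the second good sphere $a''$ at all.

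This two-fold ambiguity is exactly what the good \emph{pair} $(a',a'')$ is built to break, via \cref{six-holed-evil-twins-cross}: since $a'$ and $a''$ are disjoint, $f(a')$ and $f(a'')$ must be disjoint, but $\hat h(e')$ intersects both possible values $\hat h(a''),\hat h(e'')$ of $f(a'')$, forcing $f(a')=\hat h(a')$. The paper reaches the same two-element set $\{\hat h(a'),\hat h(e')\}$ more directly by observing that $f(a')$ lies in the $M_{0,4}$ component of $M_{n,0}\setminus \hat h(P_0)$; your detour through $S_3',S_4'$ arrives at the same place but cannot finish without the pair. Finally, your uniqueness paragraph is essentially correct but superfluous here: the paper's proof of \cref{geometrically rigid set} establishes only existence of $\hat h$, deferring the $\Out(F_n)$-uniqueness to \cref{C:geometric rigidity pants}. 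The ``main obstacle'' you anticipated is thus not where the actual difficulty lies.
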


\begin{proof}
Given a locally injective, simplicial map $f \colon X \to \Sc(M_{n,0})$, let $h \colon N \to N'$ be the homeomorphism defined by $f|_{X_0}$. By \cref{L:labels preserved}, $h$ extends to a homeomorphism $\hat h \colon M_{n,0} \to M_{n,0}$ that agrees with $f$ on $X_0$.  Now fix $A \in Y$ and consider the corresponding good spheres $a', a'' \in X$.  Let $P_A, P_{a'} \subset X$ be pants decompositions with $A \in P_A$, $a' \in P_{a'}$, and $P_0 = P_A \setminus A = P_{a'} \setminus a \subset X_0$. Since $h$ and $f$
agree on $X_0$, it follows that $\hat h$ and $f$ agree on each component of $P_A$. Hence $f(a')$ is a sphere other than $\hat h(A)$ contained in $N(\hat{h}(A) \cup \hat{h}(a'))$, the $M_{0,4}$ component of $M_{n,0}\setminus P_0$. Similarly $f(a'')$ is a sphere other than $\hat h(A)$ contained in $N(\hat{h}(A) \cup \hat{h}(a''))$. Let $e'\subset N(A\cup a')$ be the non-peripheral sphere other than $A$ and $a'$ and similarly define $e'' \subset N(A\cup a'')$. By \cref{six-holed-evil-twins-cross},
$e'$ and $e''$ intersect and both intersect $a'$ and $a''$. Thus $\hat h(e')$ and $\hat h(e'')$ intersect and both intersect $\hat h(a')$ and $\hat h(a'')$. Since $f$ is locally injective and $a'$ and $a''$ are disjoint, we conclude that $\hat h(e') \neq f(a')$ and $\hat h(e'') \neq f(a'')$. The only remaining possibility is that $\hat h(a') = f(a')$ and $\hat h(a'') = f(a'')$, as required.
\end{proof}

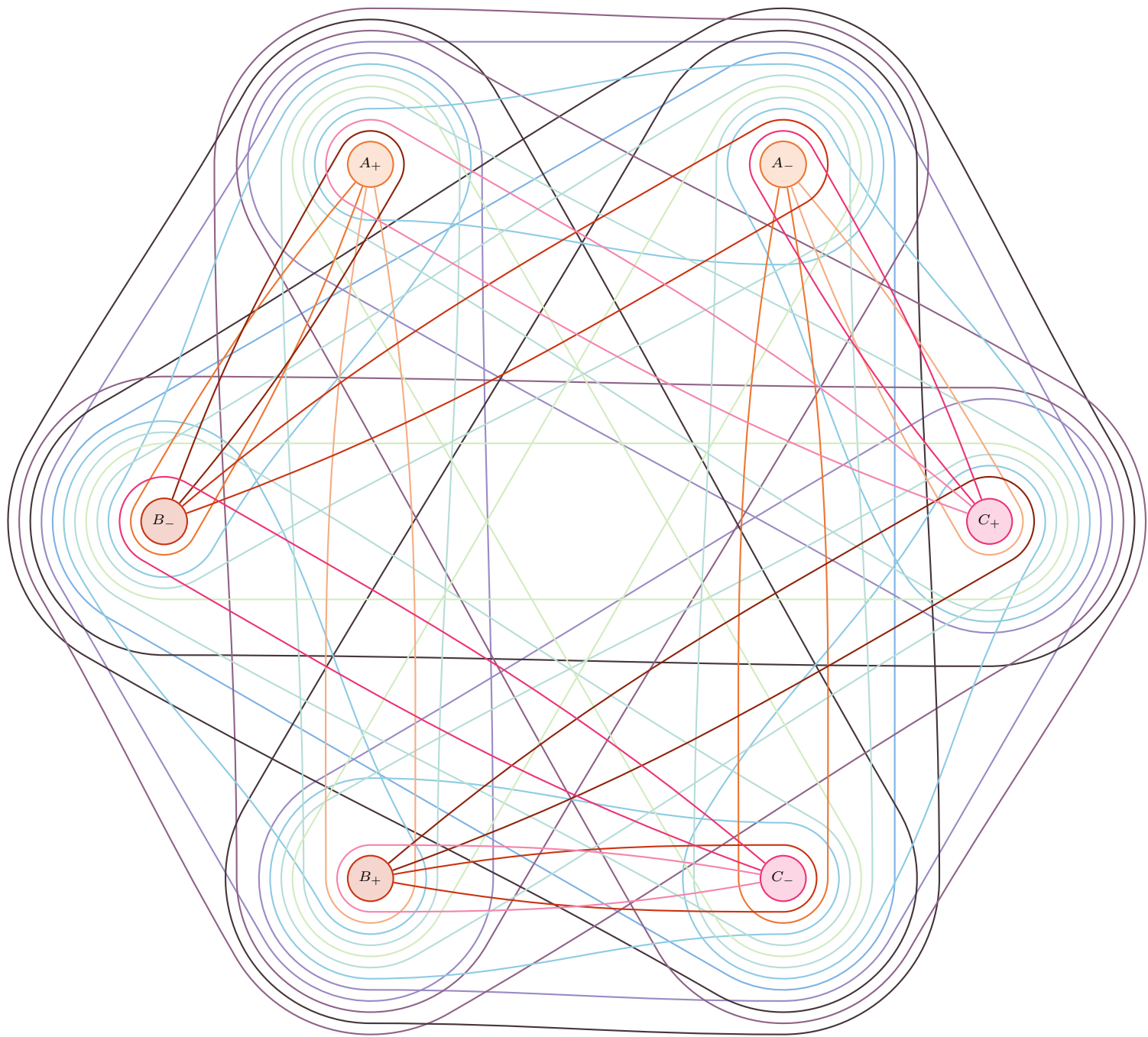
\begin{figure}[p]
\hspace*{-2.3in}
\begin{minipage}{\dimexpr\textwidth+4.6in}
\begin{center}
\tikzmath{ \dr = 7.4; \vrs = 0.2; \vrtha = 1.8; \vrtwa = 0.8; \vrga = 0.4; }

\definecolor{evensphere}{RGB}{126,178,228}
\definecolor{cyclesphere}{RGB}{155,138,196}
\definecolor{threeA}{RGB}{70,53,58}
\definecolor{threeB}{RGB}{144,99,136}

\definecolor{edgesphere}{RGB}{141,203,228}
\definecolor{chordsphere}{RGB}{181,221,216}
\definecolor{diametersphere}{RGB}{211,236,191}

\definecolor{Acolor}{RGB}{238,119,51}
\definecolor{Bcolor}{RGB}{204,51,17}
\definecolor{Ccolor}{RGB}{238,51,119}

\begin{tikzpicture}[every node/.style={font=\scriptsize},thick,scale=0.97]
\node[circle,draw=Ccolor,fill=Ccolor!20,thick] (v1) at (0:\dr) {$C_+$};
\node[circle,draw=Acolor,fill=Acolor!20,thick] (v2) at (60:\dr) {$A_-$};
\node[circle,draw=Acolor,fill=Acolor!20,thick] (v3) at (120:\dr) {$A_+$};
\node[circle,draw=Bcolor,fill=Bcolor!20,thick] (v4) at (180:\dr) {$B_-$};
\node[circle,draw=Bcolor,fill=Bcolor!20,thick] (v5) at (240:\dr) {$B_+$};
\node[circle,draw=Ccolor,fill=Ccolor!20,thick] (v6) at (300:\dr) {$C_-$};

\draw[evensphere] ($ (v2)+(0:1*\vrs + \vrtha) $) arc (0:120:1*\vrs + \vrtha) --
                    ($ (v4)+(120:1*\vrs + \vrtha) $) arc (120:240:1*\vrs + \vrtha) --
                    ($ (v6)+(240:1*\vrs + \vrtha) $) arc (240:360:1*\vrs + \vrtha) -- cycle;
\draw[cyclesphere] ($ (v1)+(240:\vrs + \vrtha) $) arc (240:390:\vrs + \vrtha) to[out=120,in=300]
                    ($ (v2)+(30:2*\vrs + \vrtha) $) arc (30:90:2*\vrs + \vrtha) to[out=180,in=0]
                    ($ (v3)+(90:2*\vrs + \vrtha) $) arc (90:240:2*\vrs + \vrtha) to[out=330,in=150] cycle;
\draw[cyclesphere] ($ (v3)+(0:\vrs + \vrtha) $) arc (0:150:\vrs + \vrtha) to[out=240,in=60]
                    ($ (v4)+(150:2*\vrs + \vrtha) $) arc (150:210:2*\vrs + \vrtha) to[out=300,in=120]
                    ($ (v5)+(210:2*\vrs + \vrtha) $) arc (210:360:2*\vrs + \vrtha) to[out=90,in=270] cycle;
\draw[cyclesphere] ($ (v5)+(120:\vrs + \vrtha) $) arc (120:270:\vrs + \vrtha) to[out=0,in=180]
                    ($ (v6)+(270:2*\vrs + \vrtha) $) arc (270:330:2*\vrs + \vrtha) to[out=60,in=240]
                    ($ (v1)+(330:2*\vrs + \vrtha) $) arc (-30:120:2*\vrs + \vrtha) to[out=210,in=30] cycle;

\draw[threeA] ($(v1) + (270:4*\vrs + \vrtha)$) arc (270:390:4*\vrs + \vrtha) to[out=120,in=300]
               ($(v2) + (30:5*\vrs + \vrtha)$) arc (30:120:5*\vrs + \vrtha) to[out=210,in=30]
               ($(v4) + (120:3*\vrs + \vrtha)$) arc (120:270:3*\vrs + \vrtha) to[out=0,in=180] cycle;
\draw[threeB] ($(v2) + (330:4*\vrs + \vrtha)$) arc (-30:90:4*\vrs + \vrtha) to[out=180,in=0]
              ($(v3) + (90:5*\vrs + \vrtha)$) arc (90:180:5*\vrs + \vrtha) to[out=270,in=90]
              ($(v5) + (180:3*\vrs + \vrtha)$) arc (180:330:3*\vrs + \vrtha) to[out=60,in=240] cycle;
\draw[threeA] ($(v3) + (30:4*\vrs + \vrtha)$) arc (30:150:4*\vrs + \vrtha) to[out=240,in=60]
              ($(v4) + (150:5*\vrs + \vrtha)$) arc (150:240:5*\vrs + \vrtha) to[out=330,in=150]
              ($(v6) + (240:3*\vrs + \vrtha)$) arc (240:390:3*\vrs + \vrtha) to[out=120,in=300] cycle;
\draw[threeB] ($(v4) + (90:4*\vrs + \vrtha)$) arc (90:210:4*\vrs + \vrtha) to[out=300,in=120]
              ($(v5) + (210:5*\vrs + \vrtha)$) arc (210:300:5*\vrs + \vrtha) to[out=30,in=210]
              ($(v1) + (300:3*\vrs + \vrtha)$) arc (300:450:3*\vrs + \vrtha) to[out=180,in=0] cycle;
\draw[threeA] ($(v5) + (150:4*\vrs + \vrtha)$) arc (150:270:4*\vrs + \vrtha) to[out=0,in=180]
              ($(v6) + (270:5*\vrs + \vrtha)$) arc (270:360:5*\vrs + \vrtha) to[out=90,in=270]
              ($(v2) + (0:3*\vrs + \vrtha)$) arc (0:150:3*\vrs + \vrtha) to[out=240,in=60] cycle;
\draw[threeB] ($(v6) + (210:4*\vrs + \vrtha)$) arc (210:330:4*\vrs + \vrtha) to[out=60,in=240]
              ($(v1) + (330:5*\vrs + \vrtha)$) arc (-30:60:5*\vrs + \vrtha) to[out=150,in=330]
              ($(v3) + (60:3*\vrs + \vrtha)$) arc (60:210:3*\vrs + \vrtha) to[out=300,in=120] cycle;

\draw[edgesphere] ($(v1) + (210:5*\vrs + \vrtwa)$) arc (210:390:5*\vrs + \vrtwa) to[out=120,in=300]
            ($(v2) + (30:\vrs + \vrtwa)$) arc (30:210:\vrs +\vrtwa) to[out=300,in=120] cycle;
\draw[edgesphere] ($(v2) + (270:5*\vrs + \vrtwa)$) arc (270:450:5*\vrs + \vrtwa) to[out=180,in=0]
            ($(v3) + (90:\vrs + \vrtwa)$) arc (90:270:\vrs +\vrtwa) to[out=0,in=180] cycle;
\draw[edgesphere] ($(v3) + (330:5*\vrs + \vrtwa)$) arc (-30:150:5*\vrs + \vrtwa) to[out=240,in=60]
            ($(v4) + (150:\vrs + \vrtwa)$) arc (150:330:\vrs +\vrtwa) to[out=60,in=240] cycle;
\draw[edgesphere] ($(v4) + (30:5*\vrs + \vrtwa)$) arc (30:210:5*\vrs + \vrtwa) to[out=300,in=120]
            ($(v5) + (210:\vrs + \vrtwa)$) arc (210:390:\vrs +\vrtwa) to[out=120,in=300] cycle;
\draw[edgesphere] ($(v5) + (90:5*\vrs + \vrtwa)$) arc (90:270:5*\vrs + \vrtwa) to[out=0,in=180]
            ($(v6) + (270:\vrs + \vrtwa)$) arc (270:450:\vrs +\vrtwa) to[out=180,in=0] cycle;
\draw[edgesphere] ($(v6) + (150:5*\vrs + \vrtwa)$) arc (150:330:5*\vrs + \vrtwa) to[out=60,in=240]
            ($(v1) + (330:\vrs + \vrtwa)$) arc (-30:150:\vrs +\vrtwa) to[out=240,in=60] cycle;
\draw[chordsphere] 
    ($(v1) + (-120:4*\vrs + \vrtwa)$) arc  (-120:60:4*\vrs + \vrtwa) to[out=150,in=330]
    ($(v3) + (60:2*\vrs + \vrtwa)$) arc (60:240:2*\vrs +\vrtwa) to[out=330,in=150] cycle;
\draw[chordsphere] 
    ($(v2) + (-60:4*\vrs + \vrtwa)$) arc  (-60:120:4*\vrs + \vrtwa) to[out=210,in=30]
    ($(v4) + (120:2*\vrs + \vrtwa)$) arc (120:300:2*\vrs +\vrtwa) to[out=30,in=210] cycle;
\draw[chordsphere] 
    ($(v3) + (0:4*\vrs + \vrtwa)$) arc  (0:180:4*\vrs + \vrtwa) to[out=270,in=90]
    ($(v5) + (180:2*\vrs + \vrtwa)$) arc (180:360:2*\vrs +\vrtwa) to[out=90,in=270] cycle;
\draw[chordsphere] 
    ($(v4) + (60:4*\vrs + \vrtwa)$) arc  (60:240:4*\vrs + \vrtwa) to[out=330,in=150]
    ($(v6) + (240:2*\vrs + \vrtwa)$) arc (-120:60:2*\vrs +\vrtwa) to[out=150,in=330] cycle;
\draw[chordsphere] 
    ($(v5) + (120:4*\vrs + \vrtwa)$) arc  (120:300:4*\vrs + \vrtwa) to[out=30,in=210]
    ($(v1) + (-60:2*\vrs + \vrtwa)$) arc (-60:120:2*\vrs +\vrtwa) to[out=210,in=30] cycle;
\draw[chordsphere] 
    ($(v6) + (180:4*\vrs + \vrtwa)$) arc (180:360:4*\vrs + \vrtwa) to[out=90,in=270]
    ($(v2) + (0:2*\vrs + \vrtwa)$) arc (0:180:2*\vrs +\vrtwa) to[out=270,in=90] cycle;

\draw[diametersphere]
    ($(v1) + (-90:3*\vrs + \vrtwa)$) arc (-90:90:3*\vrs + \vrtwa) --
    ($(v4) + (90:3*\vrs + \vrtwa)$) arc (90:270:3*\vrs + \vrtwa) -- cycle;
\draw[diametersphere]
    ($(v2) + (-30:3*\vrs + \vrtwa)$) arc (-30:150:3*\vrs + \vrtwa) --
    ($(v5) + (150:3*\vrs + \vrtwa)$) arc (150:330:3*\vrs + \vrtwa) -- cycle;
\draw[diametersphere]
    ($(v3) + (30:3*\vrs + \vrtwa)$) arc (30:210:3*\vrs + \vrtwa) --
    ($(v6) + (210:3*\vrs + \vrtwa)$) arc (210:390:3*\vrs + \vrtwa) -- cycle;
    
\draw[Acolor] (v3) to[out=230,in=60] 
    ($(v4) + (150:\vrs + \vrga)$) arc (150:330:\vrs+\vrga) 
    to[out=60,in=250] (v3);
\draw[Acolor] (v2) to[out=260,in=90]
    ($(v6) + (180:2*\vrs + \vrga)$) arc (180:360:2*\vrs + \vrga)
    to[out=90,in=280] (v2);
\draw[Acolor!60] (v3) to[out=260,in=90] 
    ($(v5) + (180:2*\vrs + \vrga)$) arc (180:360:2*\vrs+\vrga) 
    to[out=90,in=280] (v3);
\draw[Acolor!60] (v2) to[out=290,in=120]
    ($(v1) + (210:\vrs + \vrga)$) arc (210:390:\vrs + \vrga)
    to[out=120,in=310] (v2);
\draw[Bcolor] (v5) to[out=350,in=180] 
    ($(v6) + (270:\vrs + \vrga)$) arc (270:450:\vrs+\vrga) 
    to[out=180,in=10] (v5);
\draw[Bcolor] (v4) to[out=20,in=210]
    ($(v2) + (300:2*\vrs + \vrga)$) arc (-60:120:2*\vrs + \vrga)
    to[out=210,in=40] (v4);
\draw[Bcolor!70!black] (v5) to[out=20,in=210] 
    ($(v1) + (300:2*\vrs + \vrga)$) arc (-60:120:2*\vrs+\vrga) 
    to[out=210,in=40] (v5);
\draw[Bcolor!70!black] (v4) to[out=50,in=240]
    ($(v3) + (330:\vrs + \vrga)$) arc (-30:150:\vrs + \vrga)
    to[out=240,in=70] (v4);
\draw[Ccolor] (v1) to[out=110,in=300] 
    ($(v2) + (30:\vrs + \vrga)$) arc (30:210:\vrs+\vrga) 
    to[out=300,in=130] (v1);
\draw[Ccolor] (v6) to[out=140,in=330]
    ($(v4) + (60:2*\vrs + \vrga)$) arc (60:240:2*\vrs + \vrga)
    to[out=330,in=160] (v6);
\draw[Ccolor!60] (v1) to[out=140,in=330] 
    ($(v3) + (60:2*\vrs + \vrga)$) arc (60:240:2*\vrs+\vrga) 
    to[out=330,in=160] (v1);
\draw[Ccolor!60] (v6) to[out=170,in=0]
    ($(v5) + (90:\vrs + \vrga)$) arc (90:270:\vrs + \vrga)
    to[out=0,in=190] (v6);
\end{tikzpicture} \end{center}
\end{minipage}
\hspace*{-2.3in}
\caption{A view of the rigid sphere system for $M_{3,0}$, rendered in  $M_{0,6}$ with
boundary sphere identifications. The good pair disks are color-coded to indicate the identification.}
\end{figure}

\section{Exhaustion by strongly rigid sets}\label{rigid exhaustion section}

We move from one geometrically rigid set to an exhaustion of $\Sc(M_{n,0})$ by such sets by developing a notion of rigid expansion that parallels Hernández Hernández' construction in the surface setting~\cite{hernandezhernandez-exhaustion}. 
In \cref{strong rigidity section} we will conclude that each set in the exhaustion is in fact strongly rigid.

\begin{definition}\label{pants sphere terminology}
Suppose $P$ is a pants decomposition of $M_{n,0}$.
Two spheres $a, b \in P$ are \emph{adjacent in $P$} if they are two of the boundary spheres of some pair of pants component of $M_{n,0} \setminus P$. A sphere $a\in P$ is \emph{self-adjacent in $P$} if $a$ bounds two cuffs of a single pair of pants in $M_{n,0}\setminus P$.
\end{definition}

\begin{definition}\label{split sphere for pants}
Suppose $P$ is a pants decomposition of $M_{n,0}$ and $a \in P$. A sphere $b \in \Sc(M_{n,0})$ is a \emph{split sphere} for $(a,P)$ if $a$ is the unique sphere in $P$ intersecting $b$.  

If $X\subseteq \Sc(M_{n,0})$ is a subcomplex, $P \subseteq X^{(0)}$, and $b \in X^{(0)}$ is a split sphere for $(a,P)$, then we say that $P$ is \emph{$X$--split at $a$ (by $b$)}.  We  say that $P$ is \emph{$X$--split} if it is $X$--split at $a$ for some $a \in P$.  If $X$ contains every split sphere for $P \subset X$, then we say that $P$ is \emph{fully $X$--split}.
\end{definition}

Observe that if $P$ is a pants decomposition and $a\in P$, a split sphere for $(a,P)$ exists if and only if $a$ is contained in an $M_{0,4}$ component of the complement of $P\setminus\{a\}$, that is, $a$ is not self-adjacent in $P$. In this case there are exactly two split spheres for $P$ intersecting $a$, by \cref{partition-determines}. Since there are two such spheres, we cannot guarantee that adding a single split sphere results in a rigid set. However for certain pairs we can exploit \cref{six-holed-evil-twins-cross}.

\begin{definition}\label{a split pair for a sphere}
Suppose $X\subseteq \Sc(M_{n,s})$ is a subcomplex and $a \in X^{(0)}$. A pair of distinct, disjoint spheres $(b_1,b_2)$ in $\Sc(M_{n,s})^{(0)}$ is a \emph{split pair for $a$} if there exists pants decompositions $P_1, P_2 \subseteq X^{(0)}$, both containing $a$, such that $b_i$ is a split sphere for $(a,P_i)$, for $i=1,2$.
\end{definition}

\begin{lemma}\label{adding a split pair is rigid}
Suppose $X\subseteq \Sc(M_{n,0})$ is a geometrically rigid subcomplex, and $a\in X^{(0)}$ has a split pair $(b, c)$. Then the subcomplex $X_{b,c}$ induced by $X\cup \{b, c\}$ is geometrically rigid.
\end{lemma}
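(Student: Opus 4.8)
The plan is to take an arbitrary locally injective, simplicial map $f\colon X_{b,c}\to\Sc(M_{n,0})$ and produce a homeomorphism realizing it; the uniqueness clause is then free, since $X\subseteq X_{b,c}$ means any element of $\Mod(M_{n,0})$ realizing $f|_{X_{b,c}}$ also realizes $f|_X$, and geometric rigidity of $X$ forces all such elements to induce the same element of $\Out(F_n)$. By geometric rigidity of $X$ there is $h\in\Mod(M_{n,0})$ with $h|_X=f|_X$, so it suffices to check $f(b)=h(b)$ and $f(c)=h(c)$; then $h$ and $f$ agree on the vertex set $X^{(0)}\cup\{b,c\}$ of $X_{b,c}$, and being simplicial they agree on $X_{b,c}$.

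First I would set up the combinatorics of the split pair. Let $P_1,P_2\subseteq X^{(0)}$ be the pants decompositions witnessing that $(b,c)$ is a split pair for $a$, so $a\in P_1\cap P_2$ and $b$ (resp.\ $c$) is a split sphere for $(a,P_1)$ (resp.\ $(a,P_2)$). The existence of the split sphere $b$ forces $a$ to be non--self-adjacent in $P_1$, so $a$ lies in a component $V_1\cong M_{0,4}$ of the complement of $P_1\setminus\{a\}$, and $b$ is one of the two non-peripheral spheres of $V_1$ other than $a$; let $b'$ be the third, which is precisely the sphere $b'\subseteq\mathcal{N}(a\cup b)$ named in \cref{six-holed-evil-twins-cross}. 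Define $V_2\cong M_{0,4}$ and $c'$ from $P_2$ and $c$ in the same way.

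Next I would pin down $f(b)$ up to its twin. Since $b$ is disjoint from $P_1\setminus\{a\}$ and meets $a$ in a single essential circle, $(P_1\setminus\{a\})\cup\{b\}$ is again a pants decomposition, lying in $X_{b,c}^{(0)}$, and together with $P_1$ it shows $a$ and $b$ have $X_{b,c}$--detectable intersection. By \cref{detect-to-detect}, $f(a)=h(a)$ and $f(b)$ intersect essentially and $\mathcal{N}(f(a)\cup f(b))\cong M_{0,4}$. As $f$ is simplicial, $f(b)$ is disjoint from $f(P_1\setminus\{a\})=h(P_1\setminus\{a\})$, so $f(b)$ lies in the component of its complement containing $h(a)=f(a)$, namely $h(V_1)\cong M_{0,4}$. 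Local injectivity keeps $f(b)$ distinct from every sphere of $f(P_1\setminus\{a\})$, so $f(b)$ is non-peripheral in $h(V_1)$; by \cref{partition-determines} the non-peripheral spheres of $h(V_1)$ are exactly $h(a),h(b),h(b')$, and $f(b)\neq f(a)$. Hence $f(b)\in\{h(b),h(b')\}$, and symmetrically $f(c)\in\{h(c),h(c')\}$.

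Finally I would eliminate the twins using \cref{six-holed-evil-twins-cross} applied to $a,b,c$: here $b\cap c=\emptyset$, each of $b$ and $c$ meets $a$ in exactly one essential circle (being a non-peripheral sphere of $V_1$, resp.\ $V_2$, meeting the non-peripheral sphere $a$), and --- the one nontrivial point --- no component of $\partial\mathcal{N}(a\cup b\cup c)$ bounds a ball, so every component is essential. Granting this, the conclusion of \cref{six-holed-evil-twins-cross} is that $b'$ and $c'$ intersect and each meets both $b$ and $c$; consequently, among the four pairs in $\{b,b'\}\times\{c,c'\}$ only $(b,c)$ consists of disjoint spheres. But $f(b)$ and $f(c)$ are disjoint (as $b$ and $c$ are, and $f$ is simplicial), so $(h^{-1}f(b),h^{-1}f(c))$ is a disjoint pair in $\{b,b'\}\times\{c,c'\}$; therefore $f(b)=h(b)$ and $f(c)=h(c)$, as required. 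I expect the essential-boundary hypothesis of \cref{six-holed-evil-twins-cross} to be the main obstacle: one must arrange $a\cup b\cup c$ (for instance in normal position) so that $\mathcal{N}(a\cup b\cup c)$ has no inessential boundary sphere, since otherwise an innermost-disk argument would yield an isotopy lowering the necessarily essential, single-circle intersection $a\cap b$ or $a\cap c$. The remainder is bookkeeping with $X_{b,c}$--detectable intersections and \cref{partition-determines}.
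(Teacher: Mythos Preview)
Your argument is correct and follows essentially the same route as the paper: obtain $h$ from geometric rigidity of $X$, use $X_{b,c}$--detectable intersection to confine $f(b)$ and $f(c)$ to the relevant $M_{0,4}$ pieces, and then invoke \cref{six-holed-evil-twins-cross} together with the disjointness of $f(b)$ and $f(c)$ to rule out the ``twin'' spheres $b',c'$. Your added attention to the essential-boundary hypothesis of \cref{six-holed-evil-twins-cross} is a point the paper leaves implicit; otherwise the two proofs are the same.
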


\begin{proof}
Observe that both the intersection of $a$ with $b$ and $a$ with $c$ is $X_{b,c}$-detectable, using the pants decompositions $P_b$ and $P_c$ witnessing the split pair.
Now suppose $f \colon X_{b,c} \to \Sc(M_{n,0})$ is a locally injective simplicial map. Since $X$ is geometrically rigid there is a homeomorphism $h \colon M_{n,0} \to M_{n,0}$ such that in the induced map on the sphere complex $h|_X = f|_X$. By \cref{detect-to-detect}, $f(a)$ and $f(b)$ have $f(X_{b,c})$-detectable intersection, so $f(b)$ is a sphere distinct from $f(a)$ in $N(f(a)\cup f(b))$. Similarly $f(c)$ is a sphere distinct from $f(a)$ in $N(f(a)\cup f(c))$. Further, $h(P_b) = f(P_b)$ and $h(a) = f(a)$, so $h(N(a\cup b)) = N(f(a)\cup f(b))$, and similarly $h(N(a\cup c)) = N(f(a)\cup f(c))$. Let $b'$ be the sphere in $\Sc(N(a\cup b))$ other than $a$ and $b$ and define $c'$ similarly. By \cref{six-holed-evil-twins-cross}, $b'$ and $c'$ intersect and both intersect $b$ and $c$, so the same is true of $h(b')$ and $h(c')$. Since $b$ and $c$ are disjoint and $f$ is locally injective and simplicial we conclude $h(b) = f(b)$ and $h(c) = f(c)$ as required.
\end{proof}

From the lemma (and the notation from the proof), we see that $P_b$ is $X_{b,c}$--split at $a$ by $b$ (and similarly for $P_c$).  We also record the following obvious fact:

\begin{observation}\label{split-exchange X split}
If $P\subseteq X^{(0)}$ is a pants decomposition that is $X$-split at $a$ by $b\in X^{(0)}$, then the pants decomposition $P' = (P\setminus \{a\})\cup\{b\}$ is $X$-split at $b$ by $a$.
\end{observation}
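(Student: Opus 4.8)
The plan is to verify directly the two defining properties that make $P' = (P\setminus\{a\})\cup\{b\}$ an $X$--split pants decomposition at $b$ by $a$: first that $P'$ is a pants decomposition, and then that $a$ is a split sphere for $(b,P')$. The membership requirements are immediate: $a\in P\subseteq X^{(0)}$ by hypothesis, and $P' = (P\setminus\{a\})\cup\{b\}\subseteq X^{(0)}$ since $P\subseteq X^{(0)}$ and $b\in X^{(0)}$.

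For the first property I would use the local description recorded just after \cref{split sphere for pants}. Because a split sphere for $(a,P)$ exists, $a$ is not self-adjacent in $P$, so $a$ lies in an $M_{0,4}$-component $W$ of $M_{n,0}\setminus(P\setminus\{a\})$. The complex $\Sc(W)$ consists of exactly three spheres, pairwise intersecting, one of which is $a$; and $b$, being disjoint from every sphere of $P\setminus\{a\}$ while meeting $a\subset W$, can be isotoped into $W$ and is therefore one of the remaining two spheres of $\Sc(W)$. Hence $b$ cuts $W$ into two pairs of pants. Passing from $P\setminus\{a\}$ to $P'$ thus leaves every complementary component other than $W$ unchanged and replaces $W$ by two pants, so $P'$ is a pants decomposition.

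For the second property I must check that $b$ is the unique sphere of $P'$ intersecting $a$. Every sphere of $P'\setminus\{b\} = P\setminus\{a\}$ is disjoint from $a$ because $P$ is a pants decomposition, and $b$ intersects $a$ essentially because $b$ is a split sphere for $(a,P)$; hence $a$ is a split sphere for $(b,P')$, as required.

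I do not anticipate any real difficulty — the statement is a bookkeeping observation, as its label suggests — but the one point deserving care is the claim that $P'$ is genuinely a pants decomposition, which is exactly where the classification of the three pairwise-crossing spheres in $M_{0,4}$ and the equivalence between the existence of a split sphere for $(a,P)$ and $a$ being non-self-adjacent get used.
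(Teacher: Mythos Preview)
Your proof is correct and is exactly the direct verification the paper has in mind; the paper itself records this as an ``obvious fact'' and offers no proof at all, so you have simply filled in the details it omits. The one nontrivial point---that $P'$ is again a pants decomposition---is handled correctly via the $M_{0,4}$ description following \cref{split sphere for pants}.
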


\begin{lemma}\label{split pairs exist}
Suppose $X\subseteq \Sc(M_{n,0})$ is a subcomplex and $P\subseteq X^{(0)}$ is a pants decomposition that is $X$--split at $a$ by $b \in X^{(0)}$. For every sphere $c\in P$ adjacent to $a$, if $c$ has a split sphere $d$ then there is a sphere $e$ such that $(d,e)$ is a split pair for $c$.
\end{lemma}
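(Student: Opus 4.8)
The plan is to exhibit the split pair $(d,e)$ together with the two witnessing pants decompositions directly. Take $P_1 = P$: this already contains $c$, and $d$ is by hypothesis a split sphere for $(c,P_1)$. For the second, take $P_2 = (P\setminus\{a\})\cup\{b\}$, obtained from $P$ by exchanging $a$ for the given split sphere $b$. Since $b\in X^{(0)}$ we have $P_2\subseteq X^{(0)}$, and by \cref{split-exchange X split} $P_2$ is a pants decomposition (it is $X$--split at $b$ by $a$). Because $a$ has a split sphere it is not self-adjacent in $P$, so $c\ne a$ and hence $c\in P_2$. Passing from $P$ to $P_2$ changes only the pairs of pants meeting $a$; since $c$ is adjacent to $a$ (but $c\ne a$), those pants near $c$ genuinely change, and one checks that $c$ remains non--self-adjacent in $P_2$, so $(c,P_2)$ has a split sphere. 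It then suffices to choose $e$ among the two split spheres for $(c,P_2)$ with $e\ne d$ and $e$ disjoint from $d$; then $(d,e)$ is a split pair for $c$ witnessed by $P_1,P_2$.

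To make that choice I would confine everything to one small submanifold. Let $V$ be the union of the pairs of pants of $M_{n,0}\setminus P$ adjacent to $a$ or to $c$; since $a$ and $c$ are adjacent, $V$ is a chain of at most three pants, and in the generic situation $V\cong M_{0,5}$. Any split sphere for $(c,P)$ or for $(c,P_2)$ is disjoint from $P\setminus\{a,c\}$, hence from $\partial V$, while it crosses $c\subseteq\mathrm{int}(V)$; so $d$, every candidate for $e$, and the sphere $b$ all lie in $V$. Now apply \cref{partition-determines}: spheres of $\Sc(V)$ correspond to partitions of $\partial V$ into two parts of size at least two, and two such spheres are disjoint exactly when the partitions are nested. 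In this language $a$ and $c$ are disjoint; $b$ crosses $a$ and is disjoint from $c$; the two split spheres of $(c,P)$ are exactly the partitions crossing $c$ and nested with $a$; and the two split spheres of $(c,P_2)$ are exactly the partitions crossing $c$ and nested with $b$. A short enumeration of the finitely many partitions of the five boundary spheres of $V$ shows that, for either choice of $d$, exactly one of the two split spheres of $(c,P_2)$ is nested with $d$; I would take that one to be $e$. It is automatically distinct from $d$ and from $c$, and "nested with $d$" is exactly "disjoint from $d$", which finishes the argument in the generic case.

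Two remarks on where the work lies. The hypothesis that $c$ is adjacent to $a$ is essential and is used exactly once: it is what makes the pants of $P_2$ around $c$ differ from those of $P$. Without it the split spheres of $(c,P_2)$ would coincide with those of $(c,P)$, and since those two spheres cross one another no valid $e$ would exist; so the mechanism cannot be simplified. The real obstacle is the passage from the generic configuration above to the degenerate ones: when two boundary spheres of $V$ are identified in $M_{n,0}$, when $\Pi$ shares two cuffs with a neighbor (so $V$ acquires a handle), or when $c$ or a neighboring pants is self-adjacent. In each such case $\Sc(V)$ is smaller, one must re-verify that $c$ is not self-adjacent in $P_2$, and the partition/nesting bookkeeping must be redone by hand (with \cref{six-holed-evil-twins-cross} or an analogous elementary check settling the subcases where some of the relevant spheres are forced to coincide or to cross). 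Organizing this finite case analysis cleanly, rather than any single conceptual step, is the bulk of the proof.
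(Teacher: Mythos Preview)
Your approach is exactly the paper's: take $P_1=P$ and $P_2=(P\setminus\{a\})\cup\{b\}$, localize to the component $N$ of $M_{n,0}\setminus(P\setminus\{a,c\})$ containing $a$ and $c$, and use the partition description of $\Sc(N)$ from \cref{partition-determines} to pick $e$. The paper is simply more direct: it asserts $N\cong M_{0,5}$, fixes labels $a=\{1,2\}$, $b=\{2,3\}$, $c=\{4,5\}$, lists the two split spheres $d_4=\{3,4\}$, $d_5=\{3,5\}$ for $(c,P)$, and writes down the matching disjoint split spheres $e_4=\{1,5\}$, $e_5=\{1,4\}$ for $(c,P_2)$---no case analysis, no appeal to \cref{six-holed-evil-twins-cross}.

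You are over-cautious about most of your ``degenerate'' cases. The manifold $N$ is obtained by \emph{cutting} along $P\setminus\{a,c\}$, so even when two of its boundary spheres happen to be isotopic in $M_{n,0}$, the cut manifold is still literally $M_{0,5}$ and the partition bookkeeping goes through unchanged: its boundary spheres lie in $P$ and are therefore essential in $M_{n,0}$, so disjointness in $N$ gives disjointness in $M_{n,0}$, which is all you need. There is no separate re-verification required for those cases, and this is why the paper ignores them. The one configuration that is genuinely different is your ``$\Pi$ shares two cuffs with a neighbor'': when the second pair of pants meeting $a$ coincides with the second pair meeting $c$, one gets $N\cong M_{1,2}$ rather than $M_{0,5}$, and indeed for one of the two possible choices of $b$ the sphere $c$ becomes self-adjacent in $P_2$. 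The paper's sentence ``since both $a$ and $c$ have split spheres, $N\simeq M_{0,5}$'' tacitly excludes this, so on that single point your caution is warranted; but it is one short check, not ``the bulk of the proof.''
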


\begin{proof}
Fix a sphere $c\in P$ adjacent to $a$ which has a split sphere, i.e. so that $M_{n,0}\setminus \{P\setminus \{c\}\}$ has an $M_{0,4}$ connected component. 
Let $N$ be the connected component of $M_{n,0} \setminus \{P\setminus {a,c}\}$ containing $a$ and $c$. 
Since both $a$ and $c$ have split spheres, $N\simeq M_{0,5}$ and we identify $\partial M_{0,5}$ with $[5]$. Further, we identify the spheres $a$ and $c$ with size 2 subsets of $[5]$ by \cref{partition-determines},
choosing labels so that $a = \{1,2\}$, $b = \{2,3\}$ and $c = \{4, 5\}$. There are two possible split spheres for $c\in P$---the spheres $d_4 = \{3, 4\}$ and $d_5 = \{3, 5\}$---and to complete the proof of the lemma we will
produce their split pairs.  Let $P' = (P\setminus\{a\}) \cup\{b\}$. 
For $d_4$, the sphere $e_4 = \{1, 5\}$ is a split sphere for $c \in P'$, and $(d_4, e_4)$ is a split pair. For $d_5$, the sphere $e_5 = \{1,4\}$ is a split sphere for $c\in P'$ and $(d_5, e_5)$ is a split pair for $c$ in $P'$. See Figure \ref{building split pairs from a split sphere} for an illustration.
\end{proof}

\begin{figure}[ht]
\begin{tikzpicture}[every node/.style={font=\scriptsize},thick,scale=2]
\node[circle,draw=black,fill=black!10,thick] (v1) at (0:1) {$1$};
\node[circle,draw=black,fill=black!10,thick] (v2) at (72:1) {$2$};
\node[circle,draw=black,fill=black!10,thick] (v3) at (144:1) {$3$};
\node[circle,draw=black,fill=black!10,thick] (v4) at (216:1) {$4$};
\node[circle,draw=black,fill=black!10,thick] (v5) at (288:1) {$5$};

\draw[green] ($ (v1)+(216:0.2) $) arc (216:396:0.2) to[out=126,in=306]  node[midway,below left] {$a$}
                    ($ (v2)+(36:0.2) $) arc  (36:216:0.2) to[out=306,in=126]
                   cycle;
\draw[yellow] ($(v2) + (288:0.25)$) arc (288:468:0.25) to[out=198,in=18] node[midway,below right] {$b$}
            ($(v3) + (108:0.25)$) arc (108:288:0.25) to[out=18,in=198] cycle;
\draw[green] ($(v1) + (252:0.3)$) arc (252:396:0.3) to[out=126,in=306] node[midway,above right] {$c$}
            ($(v2) + (36:0.3)$) arc (36:108:0.3) to[out=198,in=18]
            ($(v3) + (108:0.3)$) arc (108:252:0.3) to[out=342,in=162] cycle;
            
\draw[blue] ($(v3) + (0:0.35)$) arc (0:180:0.35) to[out=270,in=90]
              node[midway,left] {$d_4$}
              ($(v4) + (180:0.35)$) arc (180:360:0.35) to[out=90,in=270]
              cycle;
\draw[cyan] ($(v1) + (324:0.35)$) arc (-36:144:0.35) to [out=234,in=54]
            ($(v5) + (144:0.35)$) arc (144:324:0.35) to [out=54, in=234] node[midway, below right] {$e_4$}
            cycle;
\draw[red] ($(v3) + (36:0.4)$) arc (36:216:0.4) to [out=306,in=126]
              ($(v5) + (216:0.4)$) arc (216:396:0.4) node[midway, below] {$d_5$} to [out=126,in=306]
              cycle;
\draw[orange] ($(v1) + (288:0.4)$) arc (288:468:0.4) to[out=198,in=18]
              ($(v4) + (108:0.4)$) arc (108:288:0.4) node[midway,below left] {$e_5$} to[out=18,in=198]
              cycle;
\end{tikzpicture} \caption{The split pairs $(d_4, e_4)$ and $(d_5, e_5)$ for the sphere $c$ constructed in \cref{split pairs exist}. The pairs satisfy the definition of split pair because the sphere $b$ is in $X^{(0)}$ by hypothesis.}\label{building split pairs from a split sphere}
\end{figure}
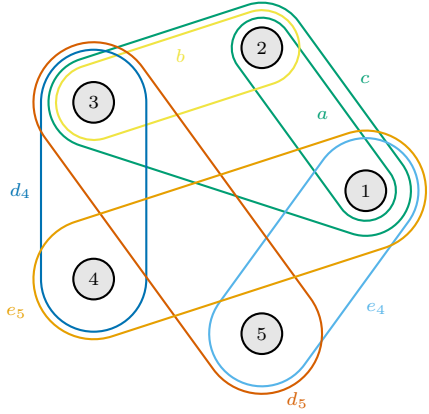

\begin{lemma}\label{pants splitting expansion}
Suppose $X\subseteq \Sc(M_{n,0})$ is a finite geometrically rigid set and $P\subseteq X^{(0)}$ is $X$--split.  Then there is a finite geometrically rigid set $X^P \supset X$ so that $P$ is fully $X^P$--split; that is,  $X^P$ contains every split sphere for $P$. \end{lemma}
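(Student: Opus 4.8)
The plan is to produce $X^P$ from $X$ by a finite chain of enlargements, each of which adjoins a split pair and hence preserves geometric rigidity by \cref{adding a split pair is rigid}; the split pairs will be manufactured by \cref{split pairs exist}, and the order of the enlargements will be dictated by a breadth-first search of an auxiliary graph associated to $P$.

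First I would encode $P$ combinatorially. Since $P$ is $X$--split, fix $a_0 \in P$ together with a split sphere $b_0\in X^{(0)}$ for $(a_0,P)$; in particular $a_0$ is not self-adjacent. Let $\Gamma$ be the dual graph of the pants decomposition $P$ --- one vertex per pair of pants of $M_{n,0}\setminus P$ and one edge per sphere of $P$, joining the (at most two) pants it bounds --- and let $\Gamma'$ be $\Gamma$ with its loop edges removed. Then $\Gamma$, and hence $\Gamma'$ (deleting loops cannot disconnect a graph), is connected, a sphere of $P$ is self-adjacent exactly when its edge is a loop, and two distinct spheres of $P$ are adjacent in the sense of \cref{pants sphere terminology} exactly when their $\Gamma'$--edges meet. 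Let $G=L(\Gamma')$ be the line graph of $\Gamma'$: its vertices are precisely the non-self-adjacent spheres of $P$, its edges record adjacency in $P$, and it is connected, since the line graph of a connected graph with an edge is connected and the edge of $a_0$ lies in $\Gamma'$. A count shows $M_{n,0}\setminus P$ has $2(n-1)\ge 4$ pairs of pants, so a spanning tree of $\Gamma$ has at least three (necessarily non-loop) edges; thus $P$ has at least three non-self-adjacent spheres, and $G$ is connected with $a_0$ a vertex having a $G$--neighbor.

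Then I would run a breadth-first search of $G$ from $a_0$ and process its vertices in that order, maintaining the invariant that the current complex $X'$ is finite and geometrically rigid, contains $P$, and is $X'$--split at $a_0$ (automatic since $X\subseteq X'$) and at every vertex processed so far. To process $c\in V(G)$, $c\neq a_0$, let $a$ be its BFS--predecessor: by the invariant $P$ is $X'$--split at $a$ by some $b\in X'^{(0)}$, and since the edge $ac$ lies in $G$ the spheres $a$ and $c$ are adjacent in $P$, both non-self-adjacent. By \cref{partition-determines} the pair $(c,P)$ has exactly two split spheres $d,d'$, and applying \cref{split pairs exist} to each of them yields spheres $e,e'$ with $(d,e)$ and $(d',e')$ split pairs for $c$. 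Since $c\in P\subseteq X'^{(0)}$, a first application of \cref{adding a split pair is rigid} adjoins $\{d,e\}$, and a second (the pair $(d',e')$ remaining a split pair for $c$ as the complex has only grown) adjoins $\{d',e'\}$, leaving a finite, geometrically rigid complex containing $d,d',e,e'$; as it contains $P$ and the split sphere $d$, it is split at $c$, restoring the invariant. The root $a_0$ has no predecessor, so I would process it last, after every other vertex of $G$, using in place of a predecessor any $G$--neighbor of $a_0$ --- it exists by the previous paragraph, has been processed, and so is a sphere at which $P$ is split. Once all of $V(G)$ is processed, the final complex $X^P$ is finite, geometrically rigid, contains $X$ and $P$, and contains both split spheres of $(c,P)$ for every non-self-adjacent $c\in P$; self-adjacent spheres have no split spheres, so $X^P$ contains every split sphere for $P$, i.e.\ $P$ is fully $X^P$--split.

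The real work is the bookkeeping of this induction: checking that at each stage the hypotheses of \cref{split pairs exist} (the predecessor is already split, and it is adjacent to the non-self-adjacent sphere being processed) and of \cref{adding a split pair is rigid} hold, and that the recursion visits every non-self-adjacent sphere, which is precisely the connectivity of $G$. Beyond the cited lemmas the only geometric input is the elementary observation that the loop-free part of the dual graph of a pants decomposition of $M_{n,0}$ is connected and, for $n\geq 3$, has at least two edges. The one genuinely awkward point is the root $a_0$: its two split spheres intersect one another, so they do not form a split pair for $a_0$, and $a_0$ cannot be treated at the start of the search; processing it at the end, once one of its $G$--neighbors has been made split, circumvents this.
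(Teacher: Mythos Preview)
Your proof is correct and follows essentially the same strategy as the paper's: both traverse the non-self-adjacent spheres of $P$ outward from $a_0$ along adjacency in $P$ (the paper via level sets $P_i$, you via BFS on the line graph), invoking \cref{split pairs exist} and \cref{adding a split pair is rigid} at each step. The only noteworthy difference is the treatment of the root $a_0$: the paper adjoins the second split sphere for $(a_0,P)$ directly at the outset and asserts without detail that the result remains geometrically rigid, whereas you defer $a_0$ to the end and handle it uniformly through an already-processed neighbor---a small trade of ordering complexity for avoiding that ad hoc step.
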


\begin{proof}
Let $a_0 \in P$ be a sphere of $P$ witnessing that $P$ is $X$-split.
Inductively define $P_0 = \{a_0\}$ and 
\[ P_i = \{ s \in P |\text{ $s$ is adjacent to $a\in P_{i-1}$ and not self-adjacent } \}. \]
Note that there exists some $k$ such that $\displaystyle{\cup_{i=1}^k P_i}$ is all non-self-adjacent spheres in $P$.  To see this, consider the dual (3-valent) graph to $P$, which has a vertex for every pair of pants and edge connecting pants that share a boundary sphere.  Then the spheres in $P$ which have a split sphere correspond precisely to the non-loop edges, which forms a connected subgraph of the dual.

Next we inductively define finite geometrically rigid subcomplexes $X_i \supset X$ such that for each $a\in P_i$, the pants decomposition $P$ is $X_i$ split at $a$ and contains both split spheres for $(a,P)$.  Since $P$ is $X$--split at $a$ by some $b \in X^{(0)}$, there is at most one other sphere $c$ that is a split sphere for $(a,P)$, and we set $X_0$ to be the subcomplex induced by $X \cup \{c\}$. It is straightforward to see that $X_0$ has the desired property.

We now suppose we have constructed $X_{i-1}$ in this fashion and construct $X_i$.
 For each sphere $c\in P_i$ there are two split spheres $d, d'$ for $(c,P)$. The sphere $c$ is adjacent to $a\in P_{i-1}$, and $P$ is $X_{i-1}$--split at $a$, so by \cref{split pairs exist} there exist split pairs $(d_c, e_c)$ and $(d'_c, e'_c)$ for $c$. Let
$X_i$ be the subcomplex induced by $X_{i-1}$ and $\{d_c,e_c,d'_c,e'_c\}_{c\in P_i}$ for the split pairs constructed in the previous sentence as $c$ ranges over $P_i$. Repeated application of \cref{adding a split pair is rigid} implies that $X_i$ is geometrically rigid, and for each $c\in P_i$, the pants decomposition is $X_i$-split at $c$ by construction. Moreover, $X_i$ contains the split spheres of $P$ intersecting each $c\in P_i$. Take $X^P = X_k$. This is a finite geometrically rigid complex in $\Sc(M_{n.0})$ by construction; moreover, every split sphere $s$ for $P$ intersects a unique non-self adjacent sphere $c_s\in P$, and $c_s\in P_i$ for some $i\leq k$, hence $s\in X_i \subseteq X^P$.
\end{proof}

\begin{proposition}\label{geometrically rigid exhaustion}
There exists a nested family of finite geometrically rigid sets $X_j \subseteq \Sc(M_{n,0})$ such that
\[ \Sc(M_{n,0}) = \cup_j X_j. \]
\end{proposition}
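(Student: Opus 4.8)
The plan is to bootstrap from the single geometrically rigid set $X$ of \cref{geometrically rigid set} by repeatedly invoking \cref{pants splitting expansion}, walking through pants decompositions by elementary moves so as to absorb every sphere of $\Sc(M_{n,0})$. Note first that $X$ itself contains an $X$--split pants decomposition: in the proof of \cref{geometrically rigid set} the pants decompositions $P_A,P_{a'}$ satisfy $A\in P_A$, $a'\in P_{a'}$ and $P_A\setminus\{A\}=P_{a'}\setminus\{a'\}$, so $a'$ crosses only $A$ among the spheres of $P_A$ and hence $P_A$ is $X$--split at $A$ by $a'$. The argument then reduces to the following claim: if $X'\subseteq\Sc(M_{n,0})$ is a finite geometrically rigid subcomplex containing some $X'$--split pants decomposition, then for every $s\in\Sc(M_{n,0})^{(0)}$ there is a finite geometrically rigid $X''\supseteq X'$ with $s\in(X'')^{(0)}$ and with $X''$ again containing an $X''$--split pants decomposition. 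Granting this, the proposition follows by a routine exhaustion: $\Sc(M_{n,0})^{(0)}$ is countable (for example $\Mod(M_{n,0})$ acts with finitely many orbits of simplices \cite{hatcher}), so enumerate it as $s_1,s_2,\dots$, put $X_1=X$, and apply the claim inductively to obtain a nested chain $X_1\subseteq X_2\subseteq\cdots$ of finite geometrically rigid sets with $s_j\in(X_{j+1})^{(0)}$; since each $X_j$ is the induced subcomplex on its vertex set and $\bigcup_j(X_j)^{(0)}=\Sc(M_{n,0})^{(0)}$, we conclude $\bigcup_j X_j=\Sc(M_{n,0})$.

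To prove the claim, fix $X'$, an $X'$--split pants decomposition $P\subseteq(X')^{(0)}$, and a sphere $s$. I would first produce a convenient target pants decomposition: choose any pants decomposition $Q$ of $M_{n,0}$ in which $s$ is not self-adjacent (extend $s$ to a pants decomposition, arranging the dual trivalent graph so that the two sides of $s$ abut distinct pairs of pants — straightforward for $n\ge3$). Then the component $R$ of $M_{n,0}\setminus(Q\setminus\{s\})$ containing $s$ is homeomorphic to $M_{0,4}$, and for any sphere $a_Q\in\Sc(R)$ with $a_Q\ne s$ the pants decomposition $Q'=(Q\setminus\{s\})\cup\{a_Q\}$ has $s$ as a split sphere for $(a_Q,Q')$. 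The essential geometric input is that $P$ and $Q'$ are joined by a sequence $P=P^0,P^1,\dots,P^m=Q'$ of pants decompositions in which each $P^{i+1}$ is obtained from $P^i$ by an \emph{elementary split move}, that is, by replacing a non-self-adjacent sphere $a_i\in P^i$ by one of the two split spheres $b_i$ for $(a_i,P^i)$.

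Given such a sequence, set $X'_0=X'$ and inductively $X'_{i+1}=(X'_i)^{P^i}$ via \cref{pants splitting expansion}. One checks by induction that each $X'_i$ is finite and geometrically rigid and that $P^i$ is $X'_i$--split: indeed $X'_{i+1}$ is finite, geometrically rigid, contains $X'_i$, and contains every split sphere for $P^i$, so $b_i\in(X'_{i+1})^{(0)}$ and thus $P^{i+1}\subseteq(X'_{i+1})^{(0)}$; by \cref{split-exchange X split} the pants decomposition $P^{i+1}$ is then $X'_{i+1}$--split (at $b_i$ by $a_i$), which is exactly the hypothesis needed to continue. Finally put $X''=(X'_m)^{Q'}$. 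This is finite and geometrically rigid, contains $X'$, and contains every split sphere for $Q'=P^m$; in particular $s\in(X'')^{(0)}$, and $X''$ still contains the $X''$--split pants decomposition $Q'$. This establishes the claim.

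The main obstacle is the elementary-split-move connectivity used above: that any two pants decompositions of $M_{n,0}$ are joined by a sequence of moves supported in $M_{0,4}$--submanifolds. This is the sphere-complex analogue of the Hatcher--Thurston connectivity of the cut-system complex of a surface, and I would obtain it either by citing the corresponding statement for sphere systems or by deducing it from the connectivity of $\Sc(M_{n,0})$ \cite{hatcher} through a standard surgery argument: given two pants decompositions, realize them with minimal intersection, locate an $M_{0,4}$--piece of one that carries an elementary move, and push across it to reduce the complexity. The remaining points — countability of $\Sc(M_{n,0})^{(0)}$, the existence of a pants decomposition in which a prescribed sphere is non-self-adjacent, and the persistence of the ``contains an $X$--split pants decomposition'' hypothesis under each enlargement — are routine.
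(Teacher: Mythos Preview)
Your proof is correct and follows essentially the same route as the paper's: start from the geometrically rigid set $X$ of \cref{geometrically rigid set}, walk through pants decompositions related by elementary split moves, apply \cref{pants splitting expansion} at each step, and use \cref{split-exchange X split} to propagate the ``contains an $X$--split pants decomposition'' hypothesis. The paper organizes the walk breadth-first (processing all pants decompositions at distance $i$ from $P_0$ before moving to distance $i+1$) while you organize it one target sphere at a time, but the underlying mechanism is identical, and both appeal to the same porism of Hatcher's connectivity argument for the elementary-split-move connectivity of pants decompositions.

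One small simplification: your detour through $Q'$ is unnecessary. If you simply connect $P$ to any pants decomposition $Q$ with $s\in Q$ by split moves $P=P^0,\dots,P^m=Q$, then your induction already yields $Q\subseteq (X'_m)^{(0)}$, so $s\in(X'_m)^{(0)}$ directly; there is no need to arrange that $s$ appear as a split sphere rather than as a member of the final pants decomposition. The paper takes this shorter path.
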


\begin{proof}
Let $X$ be the finite strongly rigid set constructed in \cref{geometrically rigid set}. By construction $X$ contains a pants decomposition $P_0$ that is $X$-split.

Define a sequence of sets of pants decompositions of $M_{n,0}$ as follows. Begin with $\mathcal{P}_0 = \{ P_0 \}$ and define
\[ \mathcal{P}_i = \{ P\text{ a pants decomposition } | \text{ there is a $P' \in \mathcal{P}_{i-1}$ such that $|P\Delta P'| = 2$ }\}. \]
Observe that if $P\in \mathcal{P}_i$ then $P$ is obtained from $P'$ by exchanging a split sphere. Thus each $\mathcal{P}_i$ is finite. Moreover, a porism of Hatcher's proof of the connectivity of $\Sc(M_{n,0})$~\cite{hatcher}*{Theorem 2.1} is that every pants decomposition can be obtained from a reference pants decomposition by exchanging finitely many split spheres, and thus every pants decomposition appears in some $\mathcal{P}_k$.

Now define a sequence of subcomplexes $X_i$ such that for each pants decomposition $P\in \mathcal{P}_{i+1}$, both $P\subseteq X_i^{(0)}$ and $P$ is $X_i^{(0)}$-split. Start with $X_0 = X^{P_0}$, the complex obtained by applying \cref{pants splitting expansion} to $X$ and $P_0$.  The conclusions of that lemma guarantee that each $P\in\mathcal{P}_1$ is contained in $X_0^{(0)}$ and is $X_0$-split. 

For each $i$ fix an enumeration of $\mathcal{P}_i = \{ P_{i,1},\ldots, P_{i,k_i} \}$, and set $X_{i,0} = X_{i-1}$. Define $X_{i,j} = (X_{i,j-1})^{P_{i,j}}$ the complex obtained by applying \cref{pants splitting expansion} to $X_{i,j-1}$ and $P_{i,j}$. Finally set $X_i = X_{i,k_i}$, this is geometrically rigid by induction. 

Every pants decomposition $P\in \mathcal{P}_{i+1}$ is obtained from some $P_{i,j}\in \mathcal{P}_i$ by exchanging a split sphere, and so it follows from \cref{pants splitting expansion} that $P\subseteq X_{i,j}\subseteq X_i$. By \cref{split-exchange X split}, $P$ is $X_{i,j}$-split, and therefore it is $X_i$-split. 

Every sphere appears in some pants decomposition, hence every sphere appears as a member of some $P_{i,j}$, whence:
\[ \Sc(M_{n,0}) = \bigcup_{k} X_k.\qedhere \]
\end{proof}

\section{From geometric rigidity to strong rigidity}\label{strong rigidity section}

We now upgrade geometric rigidity to strong rigidity, in two steps. First, we show that if a mapping class fixes the isotopy classes of a pants decomposition pointwise, then it induces the identity on the sphere complex.

\begin{lemma}
Suppose $h \in \Mod(M_{n,0})$ is in the point-wise stabilizer of a pants decomposition $P \subseteq \Sc(M_{n,0})$ such that each sphere is the boundary of two distinct complementary components. Then $h$ induces the identity on $\Sc(M_{n,0})$.
\end{lemma}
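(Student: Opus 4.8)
The plan is to put $h$ into a convenient position relative to $P$, record which spheres $h$ fixes ``with sides,'' and then propagate this information from $P$ across all of $\Sc(M_{n,0})$ using split spheres together with Hatcher's connectivity result. First I would isotope $h$ so that $h(S)=S$ for every $S\in P$, which is possible because $h$ fixes each isotopy class in $P$ and the spheres of $P$ are pairwise disjoint (the standard cut-and-isotope argument, as in the proof of \cref{C:induced homeo}). Now $h$ permutes the pair-of-pants components of $M_{n,0}\setminus P$, inducing a permutation $\sigma$ of the vertices of the $3$--valent dual graph $\Gamma$ of $P$ with the feature that $\sigma$ sends the pair of endpoints of each edge to that same pair. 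It is a short exercise that a nonidentity such $\sigma$ forces $\Gamma$ to be the theta graph; since $\pi_1$ of the theta graph is $F_2$ and $n\geq 3$, this is impossible, so $\sigma$ is trivial and $h$ preserves each complementary pants. Moreover, because $P$ has no self-adjacent sphere, the two cuffs of each $S\in P$ lie in two \emph{distinct} pants, both preserved by $h$, and therefore $h$ does not interchange the two sides of $S$.

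Call a pants decomposition $Q$ of $M_{n,0}$ \emph{inert} if, for every $s\in Q$, we have $h(s)=s$ and $h$ does not interchange the two sides of $s$; by the previous paragraph $P$ is inert. The crux is that the collection of inert decompositions is closed under exchanging a split sphere. Suppose $Q$ is inert and $b$ is a split sphere for $(a,Q)$ in the sense of \cref{split sphere for pants}. Then $a$ is not self-adjacent in $Q$, so the component $W$ of $M_{n,0}\setminus(Q\setminus\{a\})$ containing $a$ is a copy of $M_{0,4}$. Since $h$ fixes each sphere of $Q\setminus\{a\}$ and does not interchange its sides, $h$ preserves each component of $M_{n,0}\setminus(Q\setminus\{a\})$ — in particular $h(W)=W$ — and $h|_W$ fixes each of the four boundary spheres of $W$ without interchanging their sides. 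A homeomorphism of $M_{0,4}$ fixing its four boundary spheres acts trivially on two-element partitions of $\partial M_{0,4}$, hence fixes every vertex of $\Sc(M_{0,4})$ by \cref{partition-determines}; in particular it fixes $b$, and, comparing the boundary-sphere sets of the two sides of $b$, it does not interchange them. Thus $(Q\setminus\{a\})\cup\{b\}$ is again inert.

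Finally, every sphere of $\Sc(M_{n,0})$ lies in some pants decomposition, and by the porism of Hatcher's connectivity theorem already invoked in the proof of \cref{geometrically rigid exhaustion}, every pants decomposition is obtained from $P$ by finitely many split-sphere exchanges; hence every pants decomposition is inert, so $h$ fixes every vertex of $\Sc(M_{n,0})$ and induces the identity. The step I expect to require the most care is the bookkeeping of \emph{sides} of spheres throughout — especially in the degenerate situation where two pants are adjacent along more than one sphere — and it is precisely there, together with the dual-graph argument, that the hypotheses $n\geq 3$ and ``no self-adjacent sphere'' are used.
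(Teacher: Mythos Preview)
Your argument is correct, but it takes a genuinely different route from the paper's proof. The paper dispatches the lemma in three lines: once $h$ fixes each complementary pants and each boundary sphere of each pants (exactly your first paragraph), the restriction of $h$ to every pair of pants is isotopic to the identity, so $h$ is a product of Dehn twists along the spheres of $P$; Laudenbach's theorem then says Dehn twists act trivially on $\Sc(M_{n,0})$, and the lemma follows immediately.

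Your approach instead \emph{avoids} invoking the Dehn-twist kernel and works purely inside the combinatorics of the sphere complex: you define ``inert'' pants decompositions, show inertness is preserved under split-sphere exchanges (via \cref{partition-determines} on the intermediate $M_{0,4}$), and then use Hatcher's connectivity porism to reach every sphere. This is the same propagation mechanism that drives \cref{geometrically rigid exhaustion}, so your proof is pleasantly uniform with the rest of the paper, and it gives a more transparent reason \emph{why} each individual sphere is fixed rather than appealing to a global structural fact about $\Mod(M_{n,0})$. The cost is length and the side-bookkeeping you flag: one must interpret ``$h(s)=s$ and does not swap sides'' at the level of isotopy classes (co-orientation), check that this really forces $h$ to preserve each complementary region of $Q\setminus\{a\}$ even when two boundary components of the $M_{0,4}$ piece are parallel copies of the same sphere, and note that split-sphere exchanges only occur at non-self-adjacent spheres. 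These all go through, but the paper's Dehn-twist argument sidesteps them entirely.
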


\begin{proof}
If $h$ fixes each sphere of a pants decomposition up to isotopy, then $h$ fixes each complementary component up to isotopy (since $n \geq 3$). Since each sphere is the boundary of two distinct connected components, the restriction of $h$ to each pair of pants fixes the boundary components. Thus, restricted to each pair of pants $h$ is isotopic to the identity, and differs from the identity by Dehn twisting in the pants spheres $P$. Since Dehn twists generate the kernel of the map $\Mod(M_{n,0})\to\Sc(M_{n,0})$~\cite{laudenbach} we are done.
\end{proof}

\begin{corollary}\label{C:geometric rigidity pants}
If $X\subseteq \Sc(M_{n,0})$ is geometrically rigid and contains a pants decomposition where each sphere is the boundary of two distinct complementary components then it is uniquely geometrically rigid, in the sense that any mapping class that stabilizes $X$ pointwise induces the identity on $\Sc(M_{n,0})$.
\end{corollary}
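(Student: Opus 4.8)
The plan is to reduce immediately to the preceding lemma, so that essentially no new work is required. Suppose $h \in \Mod(M_{n,0})$ stabilizes $X$ pointwise, meaning that $h$ fixes the isotopy class of every vertex of $X$. By hypothesis, $X$ contains a pants decomposition $P$ each of whose spheres is the boundary of two distinct complementary components of $M_{n,0} \setminus P$. Since $P \subseteq X^{(0)}$, the mapping class $h$ in particular fixes every sphere of $P$, i.e.\ $h$ lies in the pointwise stabilizer of $P$.

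At this point the hypotheses of the previous lemma are exactly satisfied: $h$ is in the pointwise stabilizer of a pants decomposition each of whose spheres is the boundary of two distinct complementary components. Applying that lemma yields that $h$ induces the identity on all of $\Sc(M_{n,0})$, which is precisely the conclusion of the corollary. Thus the entire content of the statement is already contained in the lemma, and the proof consists in observing that the pants decomposition hypothesis on $X$ transfers the lemma's hypothesis to any $h$ stabilizing $X$ pointwise.

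There is no genuine obstacle. The geometric rigidity hypothesis on $X$ is not used to derive the displayed conclusion; it is present only to make the phrase ``uniquely geometrically rigid'' meaningful, and this corollary records the fact that, in \cref{strong rigidity section}, we may upgrade geometric rigidity of the sets in the exhaustion to strong rigidity. The one point meriting a word of care is the convention that ``stabilizes $X$ pointwise'' means fixing each vertex (isotopy class of sphere), not merely preserving the subcomplex $X$ setwise; it is this reading that licenses restricting attention to $P$ and invoking the lemma.
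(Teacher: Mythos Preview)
Your proposal is correct and matches the paper's approach exactly: the paper gives no explicit proof of this corollary, treating it as an immediate consequence of the preceding lemma, which is precisely the reduction you carry out. Your additional remarks about the role of the geometric rigidity hypothesis and the meaning of ``pointwise'' are accurate and helpful clarifications.
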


Applying this corollary to the exhaustion constructed in the previous section we give a new proof of the following theorem of Aramayona and Souto. We complete the proof of our first two main theorems using this isomorphism.

\begin{theorem}[\cite{aramayona-souto}]\label{global rigidity}
When $n\geq 3$, $\Out(F_n) \cong \Aut(\Sc(M_{n,0}))$.
\end{theorem}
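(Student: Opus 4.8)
The plan is to build the isomorphism $\Out(F_n)\to\Aut(\Sc(M_{n,0}))$ directly and then show it is onto using the exhaustion by uniquely geometrically rigid sets. First I would record the well-known natural map: $\Mod(M_{n,0})$ acts on the sphere complex, and since Dehn twists about spheres act trivially on isotopy classes of spheres, by Laudenbach this action factors through $\Out(F_n)$, giving a homomorphism $\Psi\colon\Out(F_n)\to\Aut(\Sc(M_{n,0}))$. Injectivity of $\Psi$ is essentially \cref{C:geometric rigidity pants}: if $\phi\in\Out(F_n)$ acts trivially on all of $\Sc(M_{n,0})$, then any mapping class representing it stabilizes pointwise some pants decomposition in which each sphere bounds two distinct complementary components (such exist for $n\geq 3$), so by the preceding lemma it induces the identity on $\Sc(M_{n,0})$; tracing back through Laudenbach's exact sequence $1\to(\mathbb{Z}/2)^n\rtimes(\text{twists})\to\Mod(M_{n,0})\to\Out(F_n)\to1$ and a short argument about the reflection factors, one gets $\phi=1$. (Alternatively, injectivity can be cited, but I would prefer to keep the proof self-contained.)

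The heart of the matter is surjectivity of $\Psi$, and this is where the exhaustion does the work. Let $\Phi\in\Aut(\Sc(M_{n,0}))$ be arbitrary. Take the nested exhaustion $\Sc(M_{n,0})=\bigcup_j X_j$ by finite geometrically rigid sets from \cref{geometrically rigid exhaustion}, arranged (as in that construction) so that each $X_j$ contains a pants decomposition in which every sphere bounds two distinct complementary components, hence each $X_j$ is uniquely geometrically rigid by \cref{C:geometric rigidity pants}. For each $j$, the restriction $\Phi|_{X_j}\colon X_j\to\Sc(M_{n,0})$ is a locally injective simplicial map, so geometric rigidity produces a homeomorphism $h_j$ of $M_{n,0}$ with $h_j|_{X_j}=\Phi|_{X_j}$, and unique geometric rigidity says the class of $h_j$ in $\Out(F_n)$ is well-defined. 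The key compatibility step is that these classes stabilize: since $X_j\subseteq X_{j+1}$, both $h_{j+1}$ and $h_j$ agree with $\Phi$ on $X_j$, so $h_{j+1}h_j^{-1}$ stabilizes $X_j$ pointwise and therefore (unique geometric rigidity again) represents the trivial element of $\Out(F_n)$; thus $[h_j]=[h_{j+1}]$ in $\Out(F_n)$ for all $j$. Call the common value $\phi\in\Out(F_n)$. Then $\Psi(\phi)$ and $\Phi$ agree on $X_j$ for every $j$, and since the $X_j$ exhaust $\Sc(M_{n,0})$ they agree everywhere, i.e. $\Psi(\phi)=\Phi$.

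The step I expect to be the main obstacle is not any single estimate but the bookkeeping around \emph{unique} geometric rigidity: I need to be sure that the homeomorphism supplied by \cref{geometrically rigid set}/\cref{geometrically rigid exhaustion} is canonical enough in $\Out(F_n)$ that the stabilization argument "$[h_j]=[h_{j+1}]$" is legitimate, which is exactly what \cref{C:geometric rigidity pants} is set up to provide — so the real content is checking its hypothesis (a pants decomposition with each sphere bounding two distinct components) is met by every $X_j$ in the exhaustion. A secondary point is making injectivity of $\Psi$ fully rigorous through Laudenbach's sequence, handling the $(\mathbb{Z}/2)^n$ of sphere-reflections: an element acting trivially on $\Sc(M_{n,0})$ could a priori be a nontrivial product of reflections, and one must observe that such a product moves some sphere (or cite that the reflections act nontrivially), so it too is killed. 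Once those two wrinkles are ironed out, the argument above assembles the isomorphism; I would then immediately deduce \cref{rigid set theorem,rigid exhaustion theorem} by noting that for a uniquely geometrically rigid $X_j$, any locally injective simplicial $f\colon X_j\to\Sc(M_{n,0})$ extends to a homeomorphism inducing an element $\phi\in\Out(F_n)\cong\Aut(\Sc(M_{n,0}))$ with $f=\phi|_{X_j}$, and uniqueness of $\phi$ follows from unique geometric rigidity together with the injectivity just established.
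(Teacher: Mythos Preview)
Your surjectivity argument is essentially the paper's: restrict $\Phi$ to each $X_j$, produce $h_j$ by geometric rigidity, and use \cref{C:geometric rigidity pants} to force compatibility across $j$. One organizational difference worth noting: the paper compares the $h_j$ as automorphisms of $\Sc(M_{n,0})$ (concluding directly that $\Phi$ equals the automorphism induced by $h_1$, hence $\Mod(M_{n,0})\to\Aut(\Sc(M_{n,0}))$ is onto), whereas you pass to $\Out(F_n)$ and assert $[h_j]=[h_{j+1}]$ there. Your version silently uses injectivity of $\Out(F_n)\to\Aut(\Sc(M_{n,0}))$ inside the surjectivity step, so be careful that the logical order is: first establish injectivity (via Laudenbach), then run the stabilization.

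On injectivity itself, your proposed use of \cref{C:geometric rigidity pants} is circular: you start from ``$\phi$ acts trivially on $\Sc(M_{n,0})$'' and conclude ``$\phi$ induces the identity on $\Sc(M_{n,0})$'', which is the hypothesis restated. The corollary is about mapping classes stabilizing a finite set $X$ pointwise being forced to act trivially on \emph{all} of $\Sc(M_{n,0})$; it does not convert ``trivial on $\Sc$'' into ``trivial in $\Out(F_n)$''. The paper simply cites Laudenbach for both the factoring $\Mod(M_{n,0})\to\Out(F_n)\to\Aut(\Sc(M_{n,0}))$ and the injectivity of the second map, and you should do the same rather than try to extract it from \cref{C:geometric rigidity pants}. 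With that correction your proof lines up with the paper's.
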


\begin{proof}[New proof]
Consider $\phi \in \Aut(\Sc(M_{n,0}))$. Let $X_j$ be the exhaustion of $\Sc(M_{n,0})$ by geometrically rigid sets constructed in \cref{geometrically rigid exhaustion}. For each $j$ we get a homeomorphism $h_j$ such that $\phi|_{X_j} = h_j|_{X_j}$. Moreover, by construction each $X_j$ contains a common pants decomposition where each sphere is the boundary of two distinct complementary components. Therefore, it follows from \cref{C:geometric rigidity pants} that for $j \neq k$ the homeomorphisms $h_j$ and $h_k$ induce the same automorphism of $\Sc(M_{n,0})$. Hence $\phi|_{X_j} = h_1|_{X_j}$ for all $j$, and we conclude that $\phi = h_1$. Hence $\Mod(M_{n,0}) \to \Aut(\Sc(M_{n,0}))$ is surjective.

To conclude, it is a consequence of Laudenbach's theorem~\cite{laudenbach}*{Théorème III} that this homomorphism factors through the surjective map $\Mod(M_{n,0}) \to \Out(F_n)$ and that the map $\Out(F_n)\to \Aut(\Sc(M_{n,0})$ is injective.
\end{proof}

\begin{proof}[Proof of \cref{rigid set theorem} and \cref{rigid exhaustion theorem}]
It follows from \cref{global rigidity} that the pointwise stabilizer of $\Aut(\Sc(M_{n,0}))$ of a set $X$ is equal to the image of its $\Mod(M_{n,0})$ stabilizer.

The set $X$ constructed in \cref{geometrically rigid set} contains a pants decomposition, so by \cref{C:geometric rigidity pants} the image of its $\Mod(M_{n,0})$ stabilizer is the identity. This completes the proof of \cref{rigid set theorem}.

Since $X$ is a subset of every set constructed in \cref{geometrically rigid exhaustion}, each set in the exhaustion contains a pants, and \cref{rigid exhaustion theorem} follows from an identical argument.
\end{proof}

\section{No rigid sets in rank 2} \label{S:no rigid rank 2}

The situation in rank $2$ is quite different.

\begin{proposition}
There is no finite rigid set $X \subset \Sc(M_{2,0})$.
\end{proposition}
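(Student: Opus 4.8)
The plan is to determine the combinatorial shape of $\Sc(M_{2,0})$, reduce rigidity to a statement about vertex types, and then show that an arbitrary finite subcomplex admits an explicit ``fin fold'': a locally injective simplicial self-map that cannot extend to an automorphism. \textbf{Structure.} Using \cref{partition-determines} and Laudenbach's theorem exactly as in the proofs above, one shows the non-separating spheres of $M_{2,0}$ are precisely the vertices of the Farey graph $\mathcal{F}$, two of them are disjoint exactly when the vertices are Farey-adjacent, and hence the $2$-simplices spanned by non-separating spheres are the Farey triangles (the pants decompositions by non-separating spheres). A separating sphere $a$ cuts $M_{2,0}$ into two copies of $M_{1,1}$, each of which contains a unique essential non-peripheral sphere (again by the methods above); thus $a$ is disjoint from exactly two spheres $v,w$, which are non-separating, Farey-adjacent, and span a fin $\{a,v,w\}$, so $a=a_e$ is determined by the Farey edge $e=\{v,w\}$, and two distinct separating spheres are never disjoint. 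So $\Sc(M_{2,0})$ is the flag complex $\mathcal{F}$ with one fin $\{a_e\}*e$ attached along each edge $e$ of $\mathcal{F}$; every $a_e$ has degree $2$ while every non-separating sphere has infinite degree, so every element of $\Aut(\Sc(M_{2,0}))$ preserves the partition of vertices into separating and non-separating spheres.

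\textbf{Reductions and elementary moves.} By the last sentence, to prove a finite $X$ is not rigid it suffices to build a locally injective simplicial map $f\colon X\to\Sc(M_{2,0})$ taking some separating vertex of $X$ to a non-separating sphere or vice versa. Restricting a locally injective simplicial map to the induced subcomplex on the same vertex set is again locally injective, so if $X$ is rigid so is its induced hull; hence I may assume $X$ is a full (induced) subcomplex. The maps $f$ I use equal the inclusion away from one or two vertices, and the basic moves are: (i) if a separating $a_e\in X$ ($e=\{v,w\}$) has $v$ or $w$ outside $X$, or has one of the two Farey apices of $e$ outside $X$, reroute $a_e$ onto such a non-separating sphere; (ii) if a non-separating $v\in X$ has at most one neighbour in $X$, or has exactly two neighbours $x\sim_{\mathcal F} y$ with $a_{\{x,y\}}\notin X$, reroute $v$ onto an appropriate fin apex; (iii) if a non-separating $p\in X$ and a separating $a_{\{v,w\}}\in X$ both have neighbour set exactly $\{v,w\}$ in $X$, interchange $p$ and $a_{\{v,w\}}$. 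In each case a short check confirms $f$ is locally injective and simplicial and changes a vertex type, so $X$ is not rigid.

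\textbf{Locating a foldable vertex.} The core of the argument is that every non-empty finite full $X$ admits one of these moves, and I would establish it using the trivalent dual tree $T$ of $\mathcal{F}$. The $2$-simplices of $X\cap\mathcal{F}$ form a finite set $W\subseteq V(T)$, and — after treating directly the cases where $X$ has at most one Farey triangle (there some non-separating vertex has a two-element link) — the convex hull of $W$ in $T$ has a leaf $\tau_0=\{v,w,p\}$, attached to the hull only along the $T$-edge dual to $\{v,w\}$. Convexity then forces $\tau_0$ to be the only $2$-simplex of $X$ containing $p$, which pins down $\mathrm{lk}_X(p)$: besides $v,w$ its only possible members are separating spheres $a_{\{p,u\}}\in X$ and non-separating Farey-neighbours $u$ of $p$ joined to $p$ by an edge of $X$ lying in no $2$-simplex (a \emph{free edge}), and each such $u$ is adjacent to neither $v$ nor $w$. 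Any separating sphere incident to $p$ that lies in $X$ triggers move (i); ruling those out, if $\mathrm{lk}_X(p)=\{v,w\}$ then move (ii) or (iii) folds $p$; otherwise one passes along a free edge to such a $u$ and repeats, a descent that terminates because $X$ is finite. The case where $X$ contains a separating sphere at which (i) does not apply directly is handled by the analogous descent with $a_e$ in place of $p$. Combining, $X$ is never rigid.

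\textbf{Main obstacle.} The delicate point is precisely this last step: establishing uniformly, for all finite full subcomplexes at once, that a foldable vertex exists. One must keep track of the free edges of $X\cap\mathcal{F}$ and of the separating spheres that can cluster near the periphery, and verify that the descent along free edges really terminates at a vertex to which one of moves (i)--(iii) applies. The tree-and-planarity structure of $\mathcal{F}$ is what makes this go through: a finite subcomplex of a triangulation of the plane can never be boundary-free, and the link of every vertex of $\mathcal{F}$ is a line rather than a cycle, so finite subcomplexes always have ``exposed'' simplices. Packaging this observation into a clean, case-free lemma is the principal technical hurdle.
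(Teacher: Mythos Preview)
Your structural description of $\Sc(M_{2,0})$ and your overall strategy---locate a ``corner'' of $X$ and perform a type-changing local move---match the paper's, and your reduction to full subcomplexes (which the paper does not use) is valid. The genuine gap is exactly where you flag it: the ``descent along free edges.'' As written, you have not shown that the descent terminates at a vertex to which one of (i)--(iii) applies. After leaving the leaf triangle $\tau_0=\{v,w,p\}$ along a free edge to $u$, you may find $\mathrm{lk}_X(u)=\{p,t\}$ with $p\not\sim_{\mathcal F} t$, so none of your moves fires at $u$ and you must continue; you give no invariant guaranteeing eventual success. (The descent \emph{can} be salvaged: every vertex reached lies on the $p$-side of the geodesic $\{v,w\}$, hence in no Farey $2$-simplex of $X$; and the free edges of $X\cap\mathcal F$ form a forest, since any cycle of Farey edges bounds a triangulated disk with an ear, forcing a $2$-simplex in the full $X$. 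A leaf of that forest then has at most one Farey neighbour in $X$, and (i) or (ii) applies. But none of this is in your write-up.)

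The paper sidesteps the descent entirely via a different reduction: it first disposes of any valence-$1$ vertex of $X$ directly (infinitely many re-embeddings if the other endpoint has infinite valence; a fin-to-Farey path swap if it has valence $2$), and only then passes to the convex hull $X_{\mathcal F}$ of $X\cap\mathcal F$ in $\mathcal F$, which is a triangulated polygon. At a valence-$2$ vertex $v$ of $X_{\mathcal F}$ with neighbours $u,w$ spanning an edge $e_3$, the paper performs a single \emph{length-two path} swap: exchange the Farey path $u\,v\,w$ with the fin path through the apex $a_{e_3}$ (or, in the remaining case, a fin path at $v$ with a Farey path outside $X$). Because valence-$1$ vertices have already been eliminated, no free edges dangle from $v$ and no iteration is needed. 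Working with path swaps rather than single-vertex reroutings, together with the preliminary elimination of valence-$1$ vertices, is the simplification your argument is missing.
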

\begin{proof}
The graph $\Sc(M_{2,0})$ is obtained from the Farey graph, $\mathcal F$, by adding an edge path of length two between every two adjacent vertices of $\mathcal F$.  That is, we attach (the $1$--skeleton of) a triangle--sometimes called a {\em fin}--to every edge of $\mathcal F$ along an edge; see Culler and Vogtmann's description of outer space in rank 2~\cite{culler-vogtmann-farey-fins} and Hatcher~\cite{hatcher}*{Appendix} for the translation to the sphere complex. Each vertex of $\mathcal F$ has infinite valence while the new vertices each have valence $2$.

Every element of $\Aut(\mathcal F) \cong \PGL_2(\mathbb Z)$ has a unique simplicial extension to an automorphism of $\Sc(M_{2,0})$, defining a natural isomorphism $\Aut(\Sc(M_{2,0})) \cong \Aut(\mathcal F)$.  In particular, there are two orbits of edges: the Farey edges and the added {\em fin edges}; and two orbits of vertices: the infinite valence vertices and valence $2$ vertices.  

We suppose $X \subset \Sc(M_{2,0})$ is any finite subgraph, and show that it cannot be rigid.  Since a disconnected graph cannot be rigid, we may assume $X$ is connected.  Furthermore, it must consist of more than one edge (since there are two orbits of edges).  Thus, we may assume $X$ is a connected subgraph with at least $2$ edges.

Now suppose that $X$ has a valence $1$ vertex, $v$.  This vertex must be one endpoint of an edge $e$ in $X$, and we let $u$ denote the other endpoint.  If $u$ has infinite valence in $\Sc(M_{2,0})$, then there are infinitely many simplicial embeddings of $X$ into $\Sc(M_{2,0})$ that are all the identity on $X\setminus e$ and which send $e$ to distinct edges of $\Sc(M_{2,0})$.  Since there are only two elements of $\PGL_2(\mathbb Z)$ that act as the identity on any given Farey edge, at most two of the embeddings are restrictions of elements of $\Aut(\Sc(M_{2,0}))$.  Any other embedding is therefore not the restriction of an element of $\Aut(\Sc(M_{2,0}))$ and so $X$ is not rigid.  

If $u$ is a valence $2$ vertex, then since $v$ is valence $1$, connectivity of $X$ ensures that there is a second edge $e'$ so that $e \cup e'$ is a length two path in $X$.  In this case, note that both $e$ and $e'$ are fin edges.  If $X = e \cup e'$, this is clearly not rigid since we can map this to any length two path in the Farey graph, which is thus not the restriction of an element of $\Aut(\Sc(M_{2,0}))$.  Therefore, $e \cup e'$ is a length two path in $X$ which meets the rest of $X$ in a single vertex $w$ (the other endpoint of $e'$).  Since $w$ must be infinite valence, we can find infinitely many distinct simplicial embeddings of $X$ which are the identity on $X\setminus (e\cup e')$, and again at most two of these can be restrictions of elements of $\Aut(\Sc(M_{2,0}))$, so $X$ is not rigid.

By the arguments above, we may now assume that $X$ has no valence $1$ vertices.  Let $X_{\mathcal F}$ denote the convex hull of $X\cap \mathcal F$ in $\mathcal F$.  Observe that $X_{\mathcal F}$ is (the $1$--skeleton of) a triangulation of a polygon with vertices on the boundary, and thus has a valence $2$ vertex, $v$.  Let $e_1$ and $e_2$ denote the edges of $X_{\mathcal F}$ adjacent to $v$, and $u$ and $w$, respectively, denote their other endpoints.  Then $u$ and $w$ are vertices of an edge $e_3$ which together with $e_1$ and $e_2$ defines a triangle in $X_{\mathcal F}$.

Now suppose $e_1 \cup e_2 \subset X$ and let $e \cup e'$ be the length two path of fin edges connecting $u$ and $w$.  Either both $e$ and $e'$ are in $X$ or neither is, since $X$ has no valence $1$ vertices.  In either case, there is an embedding of $X$ that maps $e_1 \cup e_2$ to $e \cup e'$ and is the identity on $X\setminus (e_1 \cup e_2 \cup e \cup e')$ (swapping $e_1 \cup e_2$ with $e \cup e'$ if the latter is contained in $X$).  Since this sends Farey edges to fin edges, this cannot be the restriction of an element of $\Aut((M_{2,0}))$, hence $X$ is not rigid.

Therefore, we may assume that one or both of $e_1$ and $e_2$ are not in $X$.  In either case, since $X_{\mathcal F}$ was the convex hull of $X \cap \mathcal F$, it follows that $v$ is a vertex of $X$.  Therefore, there is a length two path of fin edges $e \cup e'$ in $X$ having $v$ as an endpoint, and the other endpoint is either $u$ or $w$.  In either case, there is a length two path $e_4 \cup e_5$ of Farey edges connecting the endpoints of $e \cup e'$, that meets $X_{\mathcal F}$, and hence $X$, only in the endpoints.  From this we can construct a simplicial embedding of $X$ which is the identity on $X \setminus (e \cup e')$ and sends $e \cup e'$ to $e_4 \cup e_5$.  Since this sends fin edges to Farey edges, this cannot be the restriction of an element of $\Aut((\Sc(M_{2,0}))$, and hence $X$ is not rigid.

The cases above exhaust all possibilities for $X$, and in all cases $X$ fails to be rigid.  This completes the proof.
\end{proof}

\begin{bibdiv}
\begin{biblist}
\bib{aramayona-souto}{article}{
  author={Aramayona, Javier},
  author={Souto, Juan},
  title={Automorphisms of the graph of free splittings},
  journal={Michigan Math. J.},
  volume={60},
  date={2011},
  number={3},
  pages={483--493},
  issn={0026-2285},
  review={\MR {2861084}},
  doi={10.1307/mmj/1320763044},
}

\bib{aramayona-leininger}{article}{
  author={Aramayona, Javier},
  author={Leininger, Christopher J.},
  title={Finite rigid sets in curve complexes},
  journal={J. Topol. Anal.},
  volume={5},
  date={2013},
  number={2},
  pages={183--203},
  issn={1793-5253},
  review={\MR {3062946}},
  doi={10.1142/S1793525313500076},
}

\bib{aramayona-leininger-2}{article}{
  author={Aramayona, Javier},
  author={Leininger, Christopher J.},
  title={Exhausting curve complexes by finite rigid sets},
  journal={Pacific J. Math.},
  volume={282},
  date={2016},
  number={2},
  pages={257--283},
  issn={0030-8730},
  review={\MR {3478935}},
  doi={10.2140/pjm.2016.282.257},
}

\bib{brendle-margalit}{article}{
  author={Brendle, Tara E.},
  author={Margalit, Dan},
  title={Normal subgroups of mapping class groups and the metaconjecture of Ivanov},
  journal={J. Amer. Math. Soc.},
  volume={32},
  date={2019},
  number={4},
  pages={1009--1070},
  issn={0894-0347},
  review={\MR {4013739}},
  doi={10.1090/jams/927},
}

\bib{disarlo-koberda-Gonzalez}{article}{
  title={The model theory of the curve graph},
  author={Disarlo, Valentina},
  author={Koberda, Thomas},
  author={González, Javier de la Nuez},
  year={2020},
  eprint={arxiv:2008.10490},
}

\bib{culler-vogtmann-farey-fins}{article}{
  author={Culler, Marc},
  author={Vogtmann, Karen},
  title={The boundary of outer space in rank two},
  conference={ title={Arboreal group theory}, address={Berkeley, CA}, date={1988}, },
  book={ series={Math. Sci. Res. Inst. Publ.}, volume={19}, publisher={Springer, New York}, },
  date={1991},
  pages={189--230},
  review={\MR {1105335}},
}

\bib{hatcher}{article}{
  author={Hatcher, Allen},
  title={Homological stability for automorphism groups of free groups},
  journal={Comment. Math. Helv.},
  volume={70},
  date={1995},
  number={1},
  pages={39--62},
  issn={0010-2571},
  review={\MR {1314940}},
}

\bib{hernandezhernandez-exhaustion}{article}{
  author={Hern\'{a}ndez Hern\'{a}ndez, Jes\'{u}s},
  title={Exhaustion of the curve graph via rigid expansions},
  journal={Glasg. Math. J.},
  volume={61},
  date={2019},
  number={1},
  pages={195--230},
  issn={0017-0895},
  review={\MR {3882310}},
  doi={10.1017/S0017089518000174},
}

\bib{hhlm-pants}{article}{
  author={Hern\'{a}ndez Hern\'{a}ndez, Jes\'{u}s},
  author={Leininger, Christopher J.},
  author={Maungchang, Rasimate},
  title={Finite rigid subgraphs of pants graphs},
  journal={Geom. Dedicata},
  volume={212},
  date={2021},
  pages={205--223},
  issn={0046-5755},
  review={\MR {4251670}},
  doi={10.1007/s10711-020-00555-1},
}

\bib{ik-nonorientable}{article}{
  author={Ilbira, Sabahattin},
  author={Korkmaz, Mustafa},
  title={Finite rigid sets in curve complexes of nonorientable surfaces},
  journal={Geom. Dedicata},
  volume={206},
  date={2020},
  pages={83--103},
  issn={0046-5755},
  review={\MR {4091539}},
  doi={10.1007/s10711-019-00478-6},
}

\bib{irmak-superinjective}{article}{
  author={Irmak, Elmas},
  title={Superinjective simplicial maps of complexes of curves and injective homomorphisms of subgroups of mapping class groups},
  journal={Topology},
  volume={43},
  date={2004},
  number={3},
  pages={513--541},
  issn={0040-9383},
  review={\MR {2041629}},
  doi={10.1016/j.top.2003.03.002},
}

\bib{irmak-nonorientable}{article}{
  author={Irmak, Elmas},
  title={Exhausting curve complexes by finite superrigid sets on nonorientable surfaces},
  journal={Fund. Math.},
  volume={255},
  date={2021},
  number={2},
  pages={111--138},
  issn={0016-2736},
  review={\MR {4319609}},
  doi={10.4064/fm835-3-2021},
}

\bib{irmak-nonorientable-2}{article}{
  author={Irmak, Elmas},
  title={Exhausting curve complexes by finite sets on nonorientable surfaces},
  journal={J.~{T}op.~{A}nal.},
  doi={10.1142/s1793525321500679},
  date={2022},
  pages={1--29},
}

\bib{ivanov}{article}{
  author={Ivanov, Nikolai V.},
  title={Automorphism of complexes of curves and of Teichm\"{u}ller spaces},
  journal={Internat. Math. Res. Notices},
  date={1997},
  number={14},
  pages={651--666},
  issn={1073-7928},
  review={\MR {1460387}},
  doi={10.1155/S1073792897000433},
}

\bib{whitney-theorem-ref}{article}{
  author={Jung, H. A.},
  title={Zu einem Isomorphiesatz von H. Whitney f\"{u}r Graphen},
  language={German},
  journal={Math. Ann.},
  volume={164},
  date={1966},
  pages={270--271},
  issn={0025-5831},
  review={\MR {197353}},
  doi={10.1007/BF01360250},
}

\bib{korkmaz}{article}{
  author={Korkmaz, Mustafa},
  title={Automorphisms of complexes of curves on punctured spheres and on punctured tori},
  journal={Topology Appl.},
  volume={95},
  date={1999},
  number={2},
  pages={85--111},
  issn={0166-8641},
  review={\MR {1696431}},
  doi={10.1016/S0166-8641(97)00278-2},
}

\bib{laudenbach}{article}{
  author={Laudenbach, F.},
  title={Sur les $2$-sph\`eres d'une vari\'{e}t\'{e} de dimension $3$},
  language={French},
  journal={Ann. of Math. (2)},
  volume={97},
  date={1973},
  pages={57--81},
  issn={0003-486X},
  review={\MR {314054}},
  doi={10.2307/1970877},
}

\bib{luo}{article}{
  author={Luo, Feng},
  title={Automorphisms of the complex of curves},
  journal={Topology},
  volume={39},
  date={2000},
  number={2},
  pages={283--298},
  issn={0040-9383},
  review={\MR {1722024}},
  doi={10.1016/S0040-9383(99)00008-7},
}

\bib{Mate2}{article}{
  author={Maungchang, Rasimate},
  title={Exhausting pants graphs of punctured spheres by finite rigid sets},
  journal={J. Knot Theory Ramifications},
  volume={26},
  date={2017},
  number={14},
  pages={1750105, 11},
  issn={0218-2165},
  review={\MR {3735406}},
  doi={10.1142/S021821651750105X},
}

\bib{Mate}{article}{
  author={Maungchang, Rasimate},
  title={Finite rigid subgraphs of the pants graphs of punctured spheres},
  journal={Topology Appl.},
  volume={237},
  date={2018},
  pages={37--52},
  issn={0166-8641},
  review={\MR {3760775}},
  doi={10.1016/j.topol.2018.01.009},
}

\bib{McLeay}{article}{
  author={McLeay, Alan},
  title={Geometric normal subgroups in mapping class groups of punctured surfaces},
  journal={New York J. Math.},
  volume={25},
  date={2019},
  pages={839--888},
  review={\MR {4012570}},
}

\bib{shackleton}{article}{
  author={Shackleton, Kenneth J.},
  title={Combinatorial rigidity in curve complexes and mapping class groups},
  journal={Pacific J. Math.},
  volume={230},
  date={2007},
  number={1},
  pages={217--232},
  issn={0030-8730},
  review={\MR {2318453}},
  doi={10.2140/pjm.2007.230.217},
}

\bib{shinkle-arc}{article}{
  author={Shinkle, Emily},
  title={Finite rigid sets in arc complexes},
  journal={Algebr. Geom. Topol.},
  volume={20},
  date={2020},
  number={6},
  pages={3127--3145},
  issn={1472-2747},
  review={\MR {4185937}},
  doi={10.2140/agt.2020.20.3127},
}

\bib{shinkle-flip}{article}{
  author={Shinkle, Emily},
  title={Finite rigid sets in flip graphs},
  journal={Trans. Amer. Math. Soc.},
  volume={375},
  date={2021},
  number={02},
  pages={847--872},
  issn={0002-9947},
  review={\MR {4369237}},
  doi={10.1090/tran/8407},
}

\end{biblist}
\end{bibdiv}

\end{document}